\title[Geometric criterion for generalized Abel equation]{A geometric criterion for equation $\dot{x}=\sum^m_{i=0}a_i(t)x^i$
having at most $m$ isolated periodic solutions}
\author[J.Huang and H.Liang]{Jianfeng Huang$^1$ and Haihua Liang$^2$}
\address{$^1$  Department of Mathematics,\ Jinan University,\ Guangzhou\ 510632,\ P.R.\ China}
\email{thuangjf@jnu.edu.cn}
\address{$^2$
Department of Computer Science,\ Guangdong Polytechnic
Normal University,\ Guangzhou\ 510665,\ P.R.\ China}
\email{haiihuaa@tom.com}
\thanks{The first author is supported by the NSF of China (No.11401255)
and the Fundamental Research Funds for the Central Universities (No.21614325).
The second author is supported by the NSF of China (No.11201086, No.11401255) and the Excellent Young Teachers Training Program for colleges and universities of Guangdong Province, China (No.Yq2013107).
}
\newtheorem{thm}{Theorem}[section]
\newtheorem{prop}[thm]{Proposition}
\newtheorem{lem}[thm]{Lemma}
\newtheorem{cor}[thm]{Corollary}
\theoremstyle{remark}
\theoremstyle{definition}
\newcommand{\wt}{\widetilde}
\newtheorem{rem}[thm]{Remark}
\begin{document}

\maketitle

\begin{abstract}
This paper is devoted to the investigation of generalized Abel equation
$\dot{x}=S(x,t)=\sum^m_{i=0}a_i(t)x^i$, where $a_i\in\mathrm C^{\infty}([0,1])$. A solution $x(t)$ is called a {\em periodic solution} if $x(0)=x(1)$.
In order to estimate the number of isolated periodic solutions of the equation, we propose a hypothesis (H) which is only concerned with $S(x,t)$ on $m$ straight lines:
There exist $m$ real numbers $\lambda_1<\cdots<\lambda_m$ such that either
$(-1)^i\cdot S(\lambda_i,t)\geq0$ for $i=1,\cdots,m$, or $(-1)^i\cdot S(\lambda_i,t)\leq0$ for $i=1,\cdots,m$. By means of Lagrange interpolation formula, we proves that the equation has at most $m$ isolated periodic solutions (counted with multiplicities) if hypothesis (H) holds, and the upper bound is sharp.
Furthermore, this conclusion is also obtained under some weaker geometric hypotheses.
 Applying our main result for the trigonometrical generalized Abel equation with coefficients of degree one,
  we give a criterion to obtain the upper bound for the number of isolated periodic solutions. This criterion is ``almost equivalent" to hypothesis (H) and can be much more effectively checked.

\end{abstract}

\tableofcontents \setcounter{tocdepth}{1}

\section{Introduction and statements of main results}\label{intro}
In this work we consider the generalized Abel equation
\begin{align}\label{eq1}
&\dot{x}=\frac{dx}{dt}=S(x,t)=\sum^m_{i=0}a_i(t)x^i,
\end{align}
where $x\in\mathbb R$ and $a_i\in\mathrm C^{\infty}([0,1])$, $i=0,1,\cdots,m$.
A solution $x(t)$ of \eqref{eq1} is called a {\em periodic solution}, if it is defined in $[0,1]$ with $x(0)=x(1)$.

Equation \eqref{eq1} is not only a powerful tool
in dealing with the problems for limit cycles of planar
differential systems (see Cherkas \cite{taubes28}, Devlin {\em et al.} \cite{taubes5}, Lins-Neto \cite{taubes3} and Lloyd \cite{taubes4}),
but also extensively applied in the biological studies
(for instance, harvesting model, see \cite{taubes21}, \cite{taubes22} and \cite{taubes23}).
Under these backgrounds,
one of the main problems for \eqref{eq1} is to estimate its number of isolated periodic solutions.

Unfortunately, even this simplest type of differential equations
is not completely understood yet.
The first motivated works on equation \eqref{eq1} are given by Lins-Neto \cite{taubes3} and Lloyd \cite{taubes15, taubes17, taubes4}.
Both authors prove that \eqref{eq1} has at most one (resp. two) isolated periodic solutions if $m=1$ (resp. $m=2$). However,
an unexpected result shown in \cite{taubes3} is that
the number of isolated periodic solutions is not bounded for \eqref{eq1} with $m=3$ (see also Panov \cite{taubes11}).
Such result is easily extended for the equation with $m>3$ (see Gasull and Guillamon \cite{taubes8}).
Therefore, in order to bound the number of isolated periodic solutions of \eqref{eq1},
some essential conditions must be imposed.

The most classical hypothesis on which many results depend is that one of the first two non-zero coefficients of \eqref{eq1} has definite sign.
For instance,
it is well-known that \eqref{eq1} has at most three isolated periodic solutions if it is Abel equation (i.e.  $m=3$) with $a_3(t)$ keeping sign
(see Gasull and Llibre \cite{taubes2}, Lins-Neto \cite{taubes3}, Lloyd \cite{taubes4} and Pliss \cite{taubes16}).
For the case that $m=3$ and $a_2(t)$ does not change sign,
the authors in paper \cite{taubes2} prove that the number of isolated periodic solutions of \eqref{eq1} is still no more than three.
Another notable result is due to Ilyashenko \cite{taubes6}.
When $m>3$ and $a_m(t)\equiv1$, he give an upper bound for the number of isolated periodic solutions
associated with the bounds of $|a_i(t)|$, $i=0,\cdots, m-1$.
Generalized Abel equations with some coefficients $a_i(t)$ of definite signs
are also investigated by several authors (see Alkoumi and Torres \cite{taubes26}, Gasull and Guillamon \cite{taubes8} and Panov \cite{taubes10}).

The hypotheses presented above are easily invalid in some cases (especially when \eqref{eq1} is of the trigonometrical type).
In recent years, two families of investigations which admit the coefficients without fixed signs are proposed. The first one requires the symmetric conditions for some coefficients of the equation (see \cite{taubes27}, \cite{taubes7} and \cite{taubes25}).
The second one depends on the fixed sign hypotheses for
some linear combinations of the coefficients. It is started and
 motivated by \'{A}lvarez, Gasull and Giacomini \cite{taubes1}. They prove that if $S(x,t)=a_3(t)x^3+a_2(t)x^2$ with
 \begin{align*}
 a\cdot a_3(t)+b\cdot a_2(t)\neq0,\indent a,b\in\mathbb R,
 \end{align*}
 then \eqref{eq1} has at most one non-zero isolated periodic solution.
Later, Huang and Zhao in \cite{taubes18} consider a generalized case $S(x,t)=a_m(t)x^m+a_n(t)x^n+a_l(t)x^l$.
Under the hypotheses of the parity of $m,n,l$ and the inequality
 \begin{align*}
 a_l(t)\cdot\left(a_m(t)\lambda^{m-n}+a_n(t)\right)\cdot S\left((\pm1)^{m-n+1}\lambda,t\right)\neq0,\indent\lambda\neq0,
 \end{align*}
 they also get the upper bound for the number of isolated periodic solutions.
 In paper \cite{taubes19}, \'{A}lvarez,  Bravo and Fern\'{a}ndez study \eqref{eq1} and give a criterion to estimate the upper bound,
 which can be computationally checked by the algebraic methods.

For more works on the periodic solutions of \eqref{eq1}, see \cite{taubes9}, \cite{taubes20}, \cite{taubes13}, \cite{taubes12}, \cite{taubes24} and \cite{taubes14}, etc.
\vskip 0.3cm

Our theorem mainly extend the results of \cite{taubes1} and \cite{taubes18}. 
More precisely,
in order to control the upper bound for the number of periodic solutions, we propose the following hypothesis for \eqref{eq1}:\\
\noindent{\bf (H)} {\em There exist $m$ real numbers $\lambda_1<\cdots<\lambda_m$ such that either
$(-1)^i\cdot S(\lambda_i,t)\geq0$ for $i=1,\cdots,m$, or $(-1)^i\cdot S(\lambda_i,t)\leq0$ for $i=1,\cdots,m$.}

\begin{thm}\label{thm1}
Assume that hypothesis {\bf (H)} holds. Then \eqref{eq1} has at most $m$ isolated periodic solutions (counted with multiplicities). This upper bound is sharp.
\end{thm}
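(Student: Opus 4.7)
The plan is to count the zeros of the Poincaré displacement function $d(x_0):=x(1;x_0)-x_0$; by uniqueness of solutions, its isolated zeros counted with multiplicity are in bijection with the isolated periodic solutions of \eqref{eq1} counted with multiplicity. Moreover the Poincaré map $\Pi:x_0\mapsto x(1;x_0)$ is strictly increasing on its domain, so it suffices to show that $d$ admits at most $m$ zeros with multiplicity.

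The algebraic tool is Lagrange interpolation at the nodes $\lambda_1<\cdots<\lambda_m$: since $S(\cdot,t)$ has degree $m$ in $x$,
\[
S(x,t)\;=\;a_m(t)\,W(x)\;+\;\sum_{i=1}^{m}S(\lambda_i,t)\,L_i(x),
\]
with $W(x)=\prod_{i=1}^{m}(x-\lambda_i)$ and $L_i(x)=\prod_{j\neq i}(x-\lambda_j)/(\lambda_i-\lambda_j)$. The signs of $W$ and of each $L_i$ on a band $(\lambda_k,\lambda_{k+1})$ are fully explicit, and by hypothesis (H) the coefficients $S(\lambda_i,t)$ attached to the $L_i$ carry the alternating sign pattern $(-1)^{i}$; this controls the ``barrier part'' $\sum_i S(\lambda_i,t)L_i(x(t))$ evaluated along any solution.

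Suppose for contradiction that $d$ has $m+1$ zeros (with multiplicity). After perturbing the coefficients slightly so that hypothesis (H) becomes strict, the Hurwitz-type continuity of the zero count lets us reduce to the case of $m+1$ \emph{distinct} simple periodic solutions $x_0(t)<x_1(t)<\cdots<x_m(t)$; we then establish the theorem in this strict case and recover the original conclusion by letting the perturbation tend to zero. Since $S(\cdot,t)$ is the unique polynomial of degree $m$ taking the values $\dot x_j(t)=S(x_j(t),t)$ at the $m+1$ distinct nodes $x_j(t)$, a second application of Lagrange interpolation (now at the moving nodes $x_j(t)$) yields the leading-coefficient identity
\[
a_m(t)\;=\;\sum_{j=0}^{m}\frac{\dot x_j(t)}{\prod_{k\neq j}(x_j(t)-x_k(t))},\qquad t\in[0,1].
\]
Combined with the periodicity $\int_0^1\dot x_j(t)\,dt=0$ and with the first Lagrange decomposition evaluated at $x=x_j(t)$ --- whose ingredients $(-1)^i S(\lambda_i,t)\geq 0$ are sign-prescribed by (H) and whose weights $1/\prod_{k\neq j}(x_j-x_k)$ have explicit sign $(-1)^{m-j}$ --- a suitably chosen signed sum of the $m+1$ periodic relations will produce an integral whose integrand is of one strict sign at some $t$ unless all $x_j$ degenerate onto the $\lambda_i$, contradicting the vanishing of the sum.

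Sharpness is immediate from $S(x,t)=c(t)W(x)$ with $c\in C^\infty([0,1])$ of constant sign: the $m$ constants $x\equiv\lambda_i$ are periodic solutions, and (H) is trivial since $S(\lambda_i,t)\equiv 0$. The hard step is the sign analysis above: the ordering of $x_0(t),\ldots,x_m(t)$ relative to $\lambda_1,\ldots,\lambda_m$ may shift with $t$ (solutions may graze or cross a $\lambda_i$-line exactly where (H) holds with equality), so tracking how the signs of $L_i(x_j(t))$ and $W(x_j(t))$ evolve in concert with the alternating signs in (H) is the crux; the limiting argument that recovers the multiplicity-counted bound from the strict-simple case will also need to be spelled out carefully.
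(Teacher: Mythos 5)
Your proposal shares only its opening move with the paper (Lagrange interpolation of $S(\cdot,t)$ at the fixed nodes $\lambda_1,\dots,\lambda_m$) and then diverges into a divided-difference argument at the \emph{moving} nodes $x_0(t)<\cdots<x_m(t)$, in the spirit of the classical Pliss--Lins-Neto cross-ratio proof for $a_3(t)$ of definite sign. As written it has two genuine gaps. First, the central contradiction is never actually derived: you assert that ``a suitably chosen signed sum of the $m+1$ periodic relations will produce an integral whose integrand is of one strict sign,'' but this is exactly the step that needs proof, and it is not clear it can be carried out. Hypothesis {\bf (H)} gives no sign information on $a_m(t)$, so the leading-coefficient identity $a_m(t)=\sum_j \dot x_j(t)/\prod_{k\neq j}(x_j(t)-x_k(t))$ has no a priori sign; moreover the weights $1/\prod_{k\neq j}(x_j(t)-x_k(t))$ have constant sign $(-1)^{m-j}$ but time-varying magnitude, so the periodicity relations $\int_0^1\dot x_j\,dt=0$ do not transfer to the weighted integrals, and the signs of $W(x_j(t))$ and $L_i(x_j(t))$ can indeed change as solutions cross the lines $x=\lambda_i$ where {\bf (H)} holds with equality. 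You correctly identify this as ``the crux,'' but leaving the crux unproved means the theorem is not proved.

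Second, the reduction ``perturb so that {\bf (H)} becomes strict and invoke Hurwitz-type continuity to get $m+1$ distinct simple periodic solutions'' fails on two counts: a zero of the displacement function of even multiplicity can disappear entirely under a real perturbation (think of $x^2\mapsto x^2+\varepsilon$), so $m+1$ multiplicity-counted zeros need not yield $m+1$ simple ones; and a generic small perturbation of the coefficients does not preserve {\bf (H)}. The paper circumvents both problems by a different route: with $f(x)=\prod_i(x-\lambda_i)$ and the integrating factor $F=-\dot f/f$, the identity $\int_0^1\partial_xS\,dt=\int_0^1(\partial_xS+FS)\,dt$ along any periodic orbit makes $\log\dot H(x_0)=-\int_0^1 I_S\cdot f\,dt$ sign-definite on each component of $\mathbb R\setminus\{\lambda_1,\dots,\lambda_m\}$, giving at most one hyperbolic periodic solution per component and excluding the unbounded components by a comparison lemma; solutions sitting on the lines are shown to have multiplicity at most $2$, and the final count uses the tailored perturbations $\varepsilon_1 f(x)+\varepsilon_2\dot f(x)$, which visibly preserve {\bf (H)} because $f(\lambda_i)=0$ and $\dot f(\lambda_i)$ alternates in sign. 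Your sharpness example $S(x,t)=c(t)W(x)$ with $c$ of constant sign is fine and matches the paper's.
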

Two examples will be provided to illustrate the direct application of Theorem \ref{thm1}. Moreover, the existence and the location of the isolated periodic solutions in these examples will also be shown. See section 3 for details.

\begin{rem}\label{rem1}
Clearly, Hypothesis {\bf (H)} allows all the coefficients $a_0(t),\cdots, a_m(t)$ of \eqref{eq1} to be non-zero (the results in \cite{taubes1} and \cite{taubes18} admit at most three non-zero coefficients of \eqref{eq1}). And for arbitrary $m$, it always provides linear conditions with respect to these coefficients. In addition, it does not refer to the signs of $a_0(t),\cdots, a_m(t)$, but is only concerned with $S(x,t)$ on $m$ straight lines $x=\lambda_1,\cdots,\lambda_m$.
\end{rem}

It is notable that
 Hilbert's 16th problem for many planar polynomial differential systems can be reduced to the problem of determining
 the maximal number of isolated periodic solutions of \eqref{eq1},
 where the coefficients $a_0(t),\cdots, a_m(t)$ are trigonometrical polynomials (see Lins-Neto \cite{taubes3}).
  On the other hand, the cases with unbounded numbers of periodic solutions given in \cite{taubes3} and \cite{taubes11} are also
  trigonometrical generalized Abel equations.
Those numbers of periodic solutions increase with respect to the degrees of the trigonometrical
coefficients.
For these reasons, Lins \cite{taubes3} and Ilyashenko \cite{taubes14} propose to find the bound for the number of periodic solutions of trigonometrical generalized Abel equation
in terms of the degrees of the coefficients.
As an application of Theorem \ref{thm1}, we
study a simple case of \eqref{eq1} with trigonometrical coefficients of degree one.

\begin{cor}\label{example1}
Consider differential equation
\begin{align}\label{eq28}
\begin{split}
\frac{dx}{dt}=S(x,t)=&\sum^m_{i=0}\bigg(a_i+b_i\cos(2\pi t)+c_i\sin(2\pi t)\bigg)x^i,
\end{split}
\end{align}
where $t\in[0,1]$ and $a_i, b_i, c_i\in\mathbb R$, $i=0,\cdots,m$. Let
\begin{align}\label{eq30}
 f_{\text{\bf{\em a}}}(x)\triangleq \sum^m_{i=0}a_ix^i,\ \ f_{\text{\bf{\em b}}}(x)\triangleq \sum^m_{i=0}b_ix^i,\ \ f_{\text{\bf{\em c}}}(x)\triangleq \sum^m_{i=0}c_ix^i.
\end{align}
 Suppose that the following condition holds:
\begin{itemize}
\item[{\bf (C)}] There exist $m$ zeros $\kappa_1<\cdots<\kappa_m$ of $f_{\text{\bf{\em a}}}^2(x)-f_{\text{\bf{\em b}}}^2(x)-f_{\text{\bf{\em c}}}^2(x)$ such that either
$(-1)^i\cdot f_{\text{\bf{\em a}}}(\kappa_i)\geq0$ for $i=1,\cdots,m$, or $(-1)^i\cdot f_{\text{\bf{\em a}}}(\kappa_i)\leq0$ for $i=1,\cdots,m$.
\end{itemize}
Then \eqref{eq28} has
at most $m$ isolated periodic solutions (counted with multiplicities).
\end{cor}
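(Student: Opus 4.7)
The plan is to verify that condition (C) implies hypothesis (H) of Theorem \ref{thm1} with the choice $\lambda_i = \kappa_i$, and then invoke Theorem \ref{thm1} directly. There are no hidden obstacles; the whole point is a simple trigonometric bound.

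First I would write out the right-hand side evaluated on the line $x = \kappa_i$:
\begin{equation*}
S(\kappa_i,t) = f_{\text{\bf{\em a}}}(\kappa_i) + f_{\text{\bf{\em b}}}(\kappa_i)\cos(2\pi t) + f_{\text{\bf{\em c}}}(\kappa_i)\sin(2\pi t).
\end{equation*}
The oscillatory part $f_{\text{\bf{\em b}}}(\kappa_i)\cos(2\pi t) + f_{\text{\bf{\em c}}}(\kappa_i)\sin(2\pi t)$ has maximum modulus $\sqrt{f_{\text{\bf{\em b}}}^2(\kappa_i) + f_{\text{\bf{\em c}}}^2(\kappa_i)}$ over $t\in[0,1]$. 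Because $\kappa_i$ is a zero of $f_{\text{\bf{\em a}}}^2 - f_{\text{\bf{\em b}}}^2 - f_{\text{\bf{\em c}}}^2$, this maximum modulus equals $|f_{\text{\bf{\em a}}}(\kappa_i)|$. Hence $S(\kappa_i,t)$ cannot change sign as $t$ varies: it has the same sign as $f_{\text{\bf{\em a}}}(\kappa_i)$ for every $t\in[0,1]$ (and may touch zero at isolated $t$, but this is harmless for hypothesis (H), which only requires a weak inequality).

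Combining with condition (C), I obtain
\begin{equation*}
(-1)^i\cdot S(\kappa_i,t)\geq 0 \quad \text{for all } t\in[0,1],\ i=1,\dots,m,
\end{equation*}
or the reverse inequality, according to which alternative of (C) holds. This is precisely hypothesis (H) with $\lambda_i = \kappa_i$, since $\kappa_1<\cdots<\kappa_m$ is part of (C). Theorem \ref{thm1} then yields at most $m$ isolated periodic solutions, counted with multiplicities, of \eqref{eq28}. The only step requiring a line of care is the observation that equality in the trigonometric bound is possible, so $S(\kappa_i,t)$ may vanish at finitely many $t$, but since hypothesis (H) is stated with non-strict inequalities this poses no issue.
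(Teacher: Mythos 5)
Your argument is correct and is essentially the paper's own proof: the paper likewise rewrites $S(\kappa_i,t)=f_{\text{\bf{\em a}}}(\kappa_i)+\sqrt{f_{\text{\bf{\em b}}}^2(\kappa_i)+f_{\text{\bf{\em c}}}^2(\kappa_i)}\sin\big(\theta(\kappa_i)+2\pi t\big)$, uses the zero condition to identify the oscillation amplitude with $|f_{\text{\bf{\em a}}}(\kappa_i)|$ so that $(-1)^i S(\kappa_i,t)$ has weakly constant sign, and then invokes Theorem \ref{thm1}. Your handling of the degenerate case (equality in the trigonometric bound, or $f_{\text{\bf{\em a}}}(\kappa_i)=0$ forcing $S(\kappa_i,t)\equiv 0$) is consistent with the weak inequalities in hypothesis {\bf (H)}, so nothing is missing.
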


An example with all the trigonometrical coefficients changing signs will be given after the proof of Corollary \ref{example1}, see section 4 for details.

As a sufficient condition which implies Hypothesis {\bf (H)}, condition {\bf (C)} in Corollary \ref{example1} is algebraic and can be much more effectively checked (at most $2m$ real zeros of $f_{\text{\bf{\em a}}}^2(x)-f_{\text{\bf{\em b}}}^2(x)-f_{\text{\bf{\em c}}}^2(x)$ need to be checked). Furthermore, in some sense, it is also ``almost necessary'' for Hypothesis {\bf (H)} when studying \eqref{eq28}. In fact, we have

\begin{prop}\label{prop1}
Let $S(x,t)$ be defined as in \eqref{eq28}. Let $f_{\text{\bf{\em a}}}, f_{\text{\bf{\em b}}}, f_{\text{\bf{\em c}}}$ be defined as in \eqref{eq30}. Assume that equation \eqref{eq28} satisfies Hypothesis {\bf (H)}. Then it either has constant periodic solutions, or satisfies condition {\bf (C)}.
\end{prop}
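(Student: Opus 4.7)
The plan is to reformulate both Hypothesis \textbf{(H)} and condition \textbf{(C)} in terms of the polynomial
\[
g(x):=f_{\text{\bf{\em a}}}^2(x)-f_{\text{\bf{\em b}}}^2(x)-f_{\text{\bf{\em c}}}^2(x)
\]
together with the auxiliary continuous functions
\[
h_\pm(x):=f_{\text{\bf{\em a}}}(x)\pm\sqrt{f_{\text{\bf{\em b}}}^2(x)+f_{\text{\bf{\em c}}}^2(x)},
\]
which satisfy $g=h_+\,h_-$ and $h_-\leq h_+$. For every fixed $x$, the trigonometric polynomial $t\mapsto S(x,t)$ is a single harmonic with $\max_tS(x,t)=h_+(x)$ and $\min_tS(x,t)=h_-(x)$. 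Replacing $S$ by $-S$ if necessary (this negates $f_{\text{\bf{\em a}}},f_{\text{\bf{\em b}}},f_{\text{\bf{\em c}}}$, leaves $g$ unchanged, and swaps the two alternatives in both \textbf{(H)} and \textbf{(C)}), I may assume \textbf{(H)} in the form $(-1)^iS(\lambda_i,t)\geq0$. This is equivalent to $h_+(\lambda_i)\leq0$ for odd $i$ and $h_-(\lambda_i)\geq0$ for even $i$, so in particular $g(\lambda_i)\geq0$ and $(-1)^if_{\text{\bf{\em a}}}(\lambda_i)\geq0$ for each $i$.

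Next I dispose of the constant-periodic-solution case: if $f_{\text{\bf{\em a}}},f_{\text{\bf{\em b}}},f_{\text{\bf{\em c}}}$ share a common real zero $c$, then $S(c,t)\equiv0$, so $x\equiv c$ is a constant periodic solution and the conclusion holds. I may therefore assume these three polynomials have no common real zero. Then $f_{\text{\bf{\em a}}}(\lambda_i)=0$ is impossible---it would combine with the sign of $h_\pm(\lambda_i)$ to force $f_{\text{\bf{\em b}}}(\lambda_i)=f_{\text{\bf{\em c}}}(\lambda_i)=0$---so in fact $(-1)^if_{\text{\bf{\em a}}}(\lambda_i)>0$ strictly. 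This yields the key strict inequalities $h_+(\lambda_{i+1})>0$ whenever $i+1$ is even and $h_-(\lambda_{i+1})<0$ whenever $i+1$ is odd, which I will use to push zeros off the endpoints.

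The central step is an interval-by-interval construction. Fix $i\in\{1,\ldots,m-1\}$ and set $I_i:=[\lambda_i,\lambda_{i+1}]$. If $i$ is odd, $h_+$ goes from $\leq0$ at $\lambda_i$ to $>0$ at $\lambda_{i+1}$, so its largest zero $\alpha_i$ in $I_i$ satisfies $\alpha_i<\lambda_{i+1}$ and $f_{\text{\bf{\em a}}}(\alpha_i)=-\sqrt{f_{\text{\bf{\em b}}}^2(\alpha_i)+f_{\text{\bf{\em c}}}^2(\alpha_i)}<0$, the strict inequality being forced by the no-common-zero assumption. Then $h_-(\alpha_i)\leq h_+(\alpha_i)=0$ while $h_-(\lambda_{i+1})\geq0$, so $h_-$ has a smallest zero $\beta_i\in(\alpha_i,\lambda_{i+1}]$, at which $f_{\text{\bf{\em a}}}(\beta_i)>0$. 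For even $i$ the mirror construction---take $\beta_i$ to be the smallest zero of $h_-$ in $I_i$, then $\alpha_i$ the smallest zero of $h_+$ in $[\beta_i,\lambda_{i+1}]$---yields $\beta_i<\alpha_i$ in $I_i$ with $f_{\text{\bf{\em a}}}(\beta_i)>0$ and $f_{\text{\bf{\em a}}}(\alpha_i)<0$. In either case both $\alpha_i$ and $\beta_i$ are zeros of $g$.

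To conclude I set $\kappa_i:=\alpha_i$ for odd $i\leq m-1$, $\kappa_i:=\beta_i$ for even $i\leq m-1$, and take $\kappa_m$ to be the remaining element of $\{\alpha_{m-1},\beta_{m-1}\}$ (namely $\alpha_{m-1}$ if $m$ is odd and $\beta_{m-1}$ if $m$ is even). By construction each $\kappa_i$ is a zero of $g$ with $(-1)^if_{\text{\bf{\em a}}}(\kappa_i)>0$, which is condition \textbf{(C)}. The main obstacle I anticipate is the careful verification that the ordering $\kappa_1<\kappa_2<\cdots<\kappa_m$ is strict at the transition points $\lambda_{i+1}$: when, for example, $\beta_i=\lambda_{i+1}$ and the zero of $h_+$ in the next interval is also forced toward $\lambda_{i+1}$, one must reinvoke the no-common-zero hypothesis to rule out $\kappa_i=\kappa_{i+1}$. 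Once this bookkeeping is handled, condition \textbf{(C)} follows directly.
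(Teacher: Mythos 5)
Your proof is correct, and it takes a genuinely different route through the heart of the argument than the paper does. The two proofs coincide at the start: absence of constant periodic solutions means $f_{\text{\bf{\em a}}}, f_{\text{\bf{\em b}}}, f_{\text{\bf{\em c}}}$ have no common real zero, and Hypothesis {\bf (H)} then forces $\mathcal F(\lambda_i)=f_{\text{\bf{\em a}}}^2(\lambda_i)-f_{\text{\bf{\em b}}}^2(\lambda_i)-f_{\text{\bf{\em c}}}^2(\lambda_i)\geq0$ together with the strict alternation $(-1)^i f_{\text{\bf{\em a}}}(\lambda_i)>0$. From there the paper pivots on the polynomial $f_{\text{\bf{\em a}}}$ itself: since it alternates in sign at the $\lambda_i$ and has degree at most $m$, it has exactly one simple zero $\lambda'_i$ in each $(\lambda_i,\lambda_{i+1})$; there $\mathcal F(\lambda'_i)=-\bigl(f_{\text{\bf{\em b}}}^2+f_{\text{\bf{\em c}}}^2\bigr)(\lambda'_i)<0$, and the intermediate value theorem applied to $\mathcal F$ on $[\lambda_i,\lambda'_i)$ and $(\lambda'_{m-1},\lambda_m]$ yields one $\kappa_i$ per interval, with the sign of $f_{\text{\bf{\em a}}}(\kappa_i)$ controlled because each such interval lies in a region where $f_{\text{\bf{\em a}}}$ keeps a fixed sign. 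You instead apply the intermediate value theorem to the continuous envelopes $h_\pm=f_{\text{\bf{\em a}}}\pm\sqrt{f_{\text{\bf{\em b}}}^2+f_{\text{\bf{\em c}}}^2}$ on each $[\lambda_i,\lambda_{i+1}]$, and exploit that any zero of $h_+$ or $h_-$ is automatically a zero of $g=h_+h_-$ at which $f_{\text{\bf{\em a}}}=\mp\sqrt{f_{\text{\bf{\em b}}}^2+f_{\text{\bf{\em c}}}^2}$, so the sign of $f_{\text{\bf{\em a}}}(\kappa_i)$ is read off locally from the no-common-zero assumption. What your version buys: the degree bound on $f_{\text{\bf{\em a}}}$ and the counting of its zeros disappear entirely, which makes the argument slightly more robust. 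What the paper's version buys: its $\kappa_i$ come one per disjoint interval, so the ordering $\kappa_1<\cdots<\kappa_m$ is immediate, whereas you must select $m$ ordered points from the $2(m-1)$ points $\alpha_i,\beta_i$ and check the seams --- which your strict inequalities $h_+(\lambda_j)>0$ for even $j$ and $h_-(\lambda_j)<0$ for odd $j$ do successfully handle. (Both arguments tacitly assume $m\geq2$, since for $m=1$ the interval construction is empty in either version.)
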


In section 5 we show that the conclusion in Theorem \ref{thm1} is still valid under some weaker geometric conditions. More precisely, let $a(t), b(t)$ be two smooth functions on interval $[0,1]$ with $a(0)=a(1), b(0)=b(1)$ and $a(t)>0$. Let
\begin{align*}
\gamma_{\lambda}:\big(t,\lambda a(t)+b(t)\big)
\end{align*}
be a family of curves lying in $[0,1]\times\mathbb R$ with parameter $\lambda\in\mathbb R$,
and $$v_S=\big(1,S(x,t)\big)$$ be the vector field induced by \eqref{eq1}.
We give the next hypothesis.

\noindent{\bf (H')} {\em There exist $m$ real numbers $\lambda_1<\cdots<\lambda_m$ such that either
$(-1)^i\cdot\det\left(\dot{\gamma_{\lambda_i}}, v_S\right)\left|_{\gamma_{\lambda_i}}\right.\geq0$ for $i=1,\cdots,m$,
or $(-1)^i\cdot\det\left(\dot{\gamma_{\lambda_i}}, v_S\right)\left|_{\gamma_{\lambda_i}}\right.\leq0$ for $i=1,\cdots,m$, where $\dot{}$ represents the first-order derivative and $\det(\cdot,\cdot)$ is the
determinant of two $2$-dimensional vector fields.}

\begin{thm}\label{thm2}
  Assume that Hypothesis {\bf (H')} holds.
  Then \eqref{eq1} has at most $m$ non-zero isolated periodic solutions (counted with multiplicities). This upper bound is sharp.
\end{thm}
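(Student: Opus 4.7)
The plan is to reduce Theorem \ref{thm2} to Theorem \ref{thm1} by a fibre-wise affine change of variables that straightens the family $\gamma_\lambda$ into horizontal lines. I would introduce
\begin{equation*}
y \;=\; \frac{x - b(t)}{a(t)}, \qquad\text{equivalently}\qquad x \;=\; a(t)\,y + b(t).
\end{equation*}
Since $a,b\in C^\infty([0,1])$ with $a(0)=a(1)$, $b(0)=b(1)$, and $a(t)>0$, this map defines a $1$-periodic $C^\infty$ diffeomorphism on each fibre $\{t\}\times\mathbb{R}$, under which periodic solutions correspond bijectively, and isolation together with multiplicities are preserved. A short differentiation shows that $y(t)$ satisfies
\begin{equation*}
\dot y \;=\; \widetilde S(y,t) \;:=\; \frac{1}{a(t)}\Bigl(S\bigl(a(t)\,y+b(t),t\bigr) - \dot a(t)\,y - \dot b(t)\Bigr),
\end{equation*}
which is again a polynomial in $y$ of degree at most $m$ with smooth coefficients on $[0,1]$, hence falls within the framework of Theorem \ref{thm1}.

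The bridge between Hypotheses (H') and (H) is the elementary identity
\begin{equation*}
\det\bigl(\dot\gamma_\lambda,\, v_S\bigr)\Big|_{\gamma_\lambda} \;=\; S\bigl(\lambda a(t)+b(t),t\bigr) \;-\; \lambda\,\dot a(t) \;-\; \dot b(t) \;=\; a(t)\,\widetilde S(\lambda, t),
\end{equation*}
obtained by expanding the $2\times 2$ determinant of $\dot\gamma_\lambda=(1,\,\lambda\dot a+\dot b)$ against $v_S|_{\gamma_\lambda}=(1,\,S(\lambda a+b,t))$. Because $a(t)>0$, the sign of $\det(\dot\gamma_{\lambda_i}, v_S)$ along $\gamma_{\lambda_i}$ coincides with the sign of $\widetilde S(\lambda_i, t)$ on $[0,1]$ for each $i$. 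Consequently Hypothesis (H') for $S$ at levels $\lambda_1<\cdots<\lambda_m$ translates exactly into Hypothesis (H) for $\widetilde S$ at the horizontal lines $y=\lambda_1,\dots,\lambda_m$.

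Applying Theorem \ref{thm1} to $\widetilde S$ then yields at most $m$ isolated periodic solutions (counted with multiplicities) of $\dot y = \widetilde S(y,t)$, and pulling back by $x=a(t)y+b(t)$ produces the claimed bound for \eqref{eq1}; the qualifier ``non-zero'' in the statement is consistent with the fact that the reduction may introduce $y\equiv 0$ (corresponding to $x\equiv b(t)$) as a distinguished candidate that has to be set aside. Sharpness is then obtained by starting from an equation in $y$ realising the bound of Theorem \ref{thm1} and transporting it back by $x=a(t)y+b(t)$. The points where I expect the main bookkeeping to lie are (i) checking that the degree in $y$ of $\widetilde S$ truly matches the degree in $x$ of $S$, since the subtraction of $\dot a(t)\,y + \dot b(t)$ only modifies the linear and constant coefficients; and (ii) verifying that the multiplicity of an isolated periodic solution is invariant under the $t$-dependent affine diffeomorphism $y\mapsto a(t)y+b(t)$, which ultimately reduces to the chain-rule behavior of the Poincar\'e return map. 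Neither step is deep, but both should be spelled out to make the reduction rigorous.
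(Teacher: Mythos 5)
Your proposal is correct and follows essentially the same route as the paper: the affine change of variables $y=(x-b(t))/a(t)$ turns \eqref{eq1} into \eqref{eq31}, the determinant identity \eqref{eq26} converts Hypothesis {\bf (H')} into Hypothesis {\bf (H)} for the transformed equation, and Theorem \ref{thm1} then gives the bound. The only minor discrepancy is your speculation about the ``non-zero'' qualifier; the paper's proof simply asserts that \eqref{eq1} and \eqref{eq31} have the same number of periodic solutions and does not set aside any distinguished solution.
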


Clearly, two planar vectors are positively oriented (resp. negatively oriented) if and only if the determinant is positive (resp. negative). This leads to the following conclusion.
\begin{cor}\label{cor1}
  Assume that $v_S$ is transverse to $m$ curves $\gamma_{\lambda_1},\cdots,\gamma_{\lambda_m}$, where $\lambda_1<\cdots<\lambda_m$. If $\{v_S,\dot{\gamma_{\lambda_i}}\}$ and $\{v_S,\dot{\gamma_{\lambda_{i+1}}}\}$ have opposite orientations for each $i=1,\cdots,m-1$, then \eqref{eq1} has at most $m$ non-zero isolated periodic solutions (counted with multiplicities). This upper bound is sharp.
\end{cor}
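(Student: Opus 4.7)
The plan is to reduce the corollary directly to Theorem \ref{thm2} by translating the geometric hypotheses (transversality and alternating orientation) into the sign condition appearing in Hypothesis \textbf{(H')}. The key observation is that each curve $\gamma_{\lambda_i}$ is the graph of the smooth function $t\mapsto \lambda_i a(t)+b(t)$ over the connected interval $[0,1]$, so $\gamma_{\lambda_i}$ itself is connected in $[0,1]\times\mathbb{R}$.

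First I would note that transversality of $v_S$ to $\gamma_{\lambda_i}$ means precisely that $\det(\dot{\gamma}_{\lambda_i},v_S)$ has no zero along $\gamma_{\lambda_i}$. Combined with the continuity of this determinant (the entries are smooth in $t$) and the connectedness of $\gamma_{\lambda_i}$, the intermediate value theorem forces $\det(\dot{\gamma}_{\lambda_i},v_S)|_{\gamma_{\lambda_i}}$ to have a constant, nonzero sign $\varepsilon_i\in\{+1,-1\}$ all along the curve.

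Next I would translate the orientation convention recalled just before the corollary: $\{v_S,\dot{\gamma}_{\lambda_i}\}$ is positively (resp.\ negatively) oriented iff $\det(v_S,\dot{\gamma}_{\lambda_i})>0$ (resp.\ $<0$), which by antisymmetry of the $2\times 2$ determinant is equivalent to $\det(\dot{\gamma}_{\lambda_i},v_S)<0$ (resp.\ $>0$). Therefore the hypothesis that the pairs $\{v_S,\dot{\gamma}_{\lambda_i}\}$ and $\{v_S,\dot{\gamma}_{\lambda_{i+1}}\}$ carry opposite orientations for every $i=1,\dots,m-1$ is equivalent to $\varepsilon_{i+1}=-\varepsilon_i$, i.e.\ $\varepsilon_i=(-1)^{i-1}\varepsilon_1$. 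Accordingly, if $\varepsilon_1=-1$ then $(-1)^i\det(\dot{\gamma}_{\lambda_i},v_S)|_{\gamma_{\lambda_i}}\ge 0$ for all $i$, and if $\varepsilon_1=+1$ then $(-1)^i\det(\dot{\gamma}_{\lambda_i},v_S)|_{\gamma_{\lambda_i}}\le 0$ for all $i$. Either way, Hypothesis \textbf{(H')} is fulfilled, and Theorem \ref{thm2} then gives the bound of $m$ non-zero isolated periodic solutions counted with multiplicities.

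For the sharpness claim, I would invoke the sharp example already constructed for Theorem \ref{thm2}; since the corollary's hypothesis is only formally stronger than \textbf{(H')} (requiring strict non-vanishing of the determinants rather than one-sided inequalities), it suffices to observe that the extremal example realizing $m$ isolated periodic solutions may be taken to satisfy the transversality condition strictly—for instance by a small generic perturbation that preserves the alternating signs and the total count of isolated periodic solutions (using that these solutions are counted with multiplicity and persist under small perturbations). The main subtlety of the argument is therefore not computational but conceptual: one must be careful with the sign convention relating the orientation of $\{v_S,\dot{\gamma}\}$ and the determinant $\det(\dot{\gamma},v_S)$ appearing in \textbf{(H')}, and must justify the passage from pointwise transversality to a globally constant sign along each $\gamma_{\lambda_i}$ via its connectedness.
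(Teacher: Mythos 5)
Your proof is correct and takes essentially the same route as the paper's: translate transversality plus the alternating orientations into the alternating sign condition on $\det(\dot{\gamma_{\lambda_i}},v_S)|_{\gamma_{\lambda_i}}$ required by Hypothesis \textbf{(H')}, then invoke Theorem \ref{thm2}. The paper's proof is a one-line version of yours, leaving implicit the constant-sign-by-connectedness step and the sharpness example that you spell out.
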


The rest of this paper is organized as follows: In section 2 we
give several preliminary results. In section 3 we
prove Theorem \ref{thm1}. Corollary \ref{example1} and Proposition \ref{prop1} are obtained in section 4. Theorem \ref{thm2} and Corollary \ref{cor1} are proved in section 5.

\section{Preliminaries}\label{an
apprpriate label}
In this section we mainly give two lemmas and two properties that are useful for the proofs of the theorems.
\vskip0.3cm


Let $x(t,x_0)$ be the solution of \eqref{eq1} with $x(0,x_0)=x_0$. It is well-known that (see Lloyd \cite{taubes4} for instance)
\begin{align}\label{eq7}
\frac{\partial x}{\partial x_0}(t,x_0)=\exp\int^t_0\frac{\partial S}{\partial x}\big(x(s,x_0),s\big)ds.
\end{align}
Moreover,
for the return map
\begin{align*}
H(x_0)=x(1,x_0),
\end{align*}
we have
\begin{align}\label{eq40}
\begin{split}
&\dot{H}(x_0)=\exp\int^1_0\frac{\partial S}{\partial x}\big(x(t,x_0),t\big)dt,\\
&\ddot{H}(x_0)=\dot{H}(x_0)\cdot\int^1_0\frac{\partial^2 S}{\partial x^2}\big(x(t,x_0),t\big)
\cdot\frac{\partial x}{\partial x_0}(t,x_0)dt,
\end{split}
\end{align}
where $\dot{}$ and $\ddot{}$ represent the first-order and second-order derivatives, respectively.
Thus the lemma below is obtained.
\begin{lem}\label{lem4}
Let $U$ be an open interval in $\mathbb R$ and $S(x,t)$ be defined as in \eqref{eq1}. Suppose that $F\in\mathrm C^0(U)$ and
\begin{align*}
G(x,t)\triangleq\frac{\partial S}{\partial x}(x,t)+F(x)\cdot S(x,t),\indent (x,t)\in U\times([0,1]).
\end{align*}
The following statements hold.
\begin{itemize}
\item[(i)] Assume that $x(t,x_0)$ is a periodic solution of \eqref{eq1} located in $U$.
Then
\begin{align}\label{eq39}
\begin{split}
\int^{1}_{0}\frac{\partial S}{\partial x}\big(x(t,x_0),t\big)dt=\int^{1}_{0}G\big(x(t,x_0),t\big)dt.
\end{split}
\end{align}
\item[(ii)] If $G\big|_{U\times[0,1]}\geq0$ (resp. $\leq0$) and there exists a non-empty open set $E\subseteq[0,1]$ such that
$G\big|_{U\times E}\neq0$,
then \eqref{eq1} has at most $1$ periodic solution in $U$, which is hyperbolic unstable (resp. stable).
\end{itemize}
\end{lem}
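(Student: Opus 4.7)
For part (i), the key observation is that the extra term $F(x)\cdot S(x,t)$ integrates to zero along any periodic trajectory, because it is an exact $t$-derivative. Concretely, pick any antiderivative $\widetilde F$ of $F$ on $U$; then the chain rule together with $\dot x(t,x_0)=S(x(t,x_0),t)$ gives
\[
\frac{d}{dt}\widetilde F\bigl(x(t,x_0)\bigr)=F\bigl(x(t,x_0)\bigr)\cdot S\bigl(x(t,x_0),t\bigr).
\]
Integrating over $[0,1]$ and invoking the periodicity $x(0,x_0)=x(1,x_0)$ forces the integral of the right-hand side to vanish. Substituting the definition of $G$ into the right-hand side of \eqref{eq39} then reproduces the left-hand side, which is precisely the claimed identity.

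For part (ii), the plan is to combine (i) with the first line of \eqref{eq40}. Along any periodic solution in $U$ we obtain
\[
\dot H(x_0)=\exp\int_0^1 G\bigl(x(t,x_0),t\bigr)\,dt.
\]
Because $G$ has a fixed sign on $U\times[0,1]$ and is strictly sign-definite on the cylinder $U\times E$, the integrand is $\geq 0$ (resp. $\leq 0$) throughout and strictly positive (resp. negative) for $t\in E$ (any periodic trajectory lies in $U$, so its $x$-component is inside $U$ for every $t$). Hence the integral is strictly positive (resp. negative), so $\dot H(x_0)>1$ (resp. $<1$) at every periodic solution in $U$, which is exactly the claimed hyperbolic instability (resp. stability).

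Uniqueness then follows from the displacement function $d(x_0)=H(x_0)-x_0$. Each periodic solution in $U$ is a simple zero of $d$ at which $\dot d(x_0)=\dot H(x_0)-1>0$ (resp. $<0$), so these zeros are automatically isolated and all of the same transversal sign. If two of them $x_0^{(1)}<x_0^{(2)}$ existed in $U$, I would pick them so that no further periodic solution lies strictly in between (possible by isolatedness); then $d>0$ just to the right of $x_0^{(1)}$ and $d<0$ just to the left of $x_0^{(2)}$, so the intermediate value theorem forces an additional zero in $(x_0^{(1)},x_0^{(2)})$, contradicting the choice. The main delicate point I expect to wrestle with is how precisely one must read ``$G|_{U\times E}\neq 0$'': the argument really wants $G$ to be strictly sign-definite on the full cylinder $U\times E$, since this is what guarantees that every periodic trajectory in $U$ picks up a strictly positive contribution to the integral regardless of the starting value $x_0$.
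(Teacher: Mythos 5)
Your proof is correct and follows essentially the same route as the paper: part (i) via integrating $F$ along the closed trajectory, and part (ii) via the formula $\dot H(x_0)=\exp\int_0^1 G\,dt$ to get hyperbolicity with a fixed stability type. The only difference is that you spell out, via the displacement function and the intermediate value theorem, the fact that two consecutive hyperbolic periodic solutions must have opposite stability, which the paper simply cites; your reading of $G|_{U\times E}\neq 0$ as strict sign-definiteness on the cylinder is also the intended one.
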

\begin{proof}
(i) By assumption, we know $x(t,x_0)\in U$ for $t\in[0,1]$, and $x(0,x_0)=x(1,x_0)$. Therefore,
\begin{align*}
\int^{1}_{0}F\big(x(t,x_0)\big)\cdot S\big(x(t,x_0),t\big)dt
&=\int^{x(1,x_0)}_{x(0,x_0)}F(x)dx
=0,
\end{align*}
which implies that \eqref{eq39} holds.

(ii) 
For an arbitrary periodic solution $x(t,x_0)$ in $U$, it follows from \eqref{eq40} and statement (i) that
\begin{align*}
\dot{H}(x_0)
&=\exp\left(\int_{[0,1]\backslash E}G\big(x(t,x_0),t\big)dt\right)
\cdot\exp\left(\int_{E}G\big(x(t,x_0),t\big)dt\right).
\end{align*}
Hence, $x(t,x_0)$ is hyperbolic unstable (resp. stable) if $G|_{U\times[0,1]}\geq0$ (resp. $\leq0$) and $G|_{U\times E}\neq0$.
Together with the fact that two consecutive hyperbolic
periodic solutions have different stability (see also in \cite{taubes8}), \eqref{eq1} has at most 1 periodic solution in $U$.
\end{proof}

Suppose that \eqref{eq1} satisfies Hypothesis {\bf (H)}. Then a periodic solution of \eqref{eq1} is either located in $\mathbb R\backslash\{\lambda_1,\cdots,\lambda_m\}$, or equal identically to one of $\lambda_1,\cdots,\lambda_m$. Now define
\begin{align}\label{eq32}
f(x)=\prod_{i=1}^{m}(x-\lambda_i).
\end{align}
We know by \eqref{eq1} that $S(x,t)-a_m(t)f(x)$ is a polynomial in $x$ of degree no more than $m-1$.
According to Lagrange interpolation formula, for $x\in \mathbb R\backslash\{\lambda_1,\cdots,\lambda_m\}$,
\begin{align*}
S(x,t)-a_m(t)f(x)
&=\sum^{m}_{i=1}\frac{\prod_{j\neq i}(x-\lambda_j)}{\prod_{j\neq i}(\lambda_i-\lambda_j)}\cdot S(\lambda_i,t)\\
&=\left(\sum^{m}_{i=1}\frac{S(\lambda_i,t)}{\prod_{j\neq i}(\lambda_i-\lambda_j)}\cdot\frac{1}{x-\lambda_i}\right)\cdot f(x),
\end{align*}
i.e.
\begin{align}\label{eq45}
S(x,t)=\left(a_m(t)+\sum^{m}_{i=1}\frac{S(\lambda_i,t)}{\prod_{j\neq i}(\lambda_i-\lambda_j)}\cdot
\frac{1}{x-\lambda_i}\right) f(x).
\end{align}
Hence, together with Lemma \ref{lem4}, we get the following proposition.


\begin{prop}\label{lem5}
Assume that \eqref{eq1} satisfies Hypothesis {\bf (H)}. If $S(\lambda_1,t),\cdots,S(\lambda_m,t)$ are not
all identically zero, then the following statements hold.
\begin{itemize}
\item[(i)] Equation \eqref{eq1} has at most $1$ isolated periodic solution in each connected component of $\mathbb R\backslash\{\lambda_1,\cdots,\lambda_m\}$, counted with multiplicity.
\item[(ii)] Assume that $\lambda\in\{\lambda_1,\cdots,\lambda_m\}$. Let $U_1$ and $U_2$ be two different connected components of $\mathbb R\backslash\{\lambda_1,\cdots,\lambda_m\}$, with point $\lambda$ adjoined. If $S(\lambda,t)\equiv0$, then either $U_1$ or $U_2$ contains no periodic solutions of \eqref{eq1}. Furthermore, the multiplicity of $x(t,\lambda)\equiv\lambda$ is no more than $2$.
\end{itemize}
\end{prop}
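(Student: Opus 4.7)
My plan is to combine the Lagrange decomposition \eqref{eq45} with Lemma \ref{lem4}, keeping careful track of signs. Set
\[
c_i(t):=\frac{S(\lambda_i,t)}{\prod_{j\neq i}(\lambda_i-\lambda_j)},
\]
and note that $\mathrm{sgn}\prod_{j\neq i}(\lambda_i-\lambda_j)=(-1)^{m-i}$, so Hypothesis \textbf{(H)} forces every $c_i(t)$ to have the common sign $(-1)^m$ (or its opposite), and by assumption at least one $c_i$ is not identically zero.

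For part (i), on any connected component $U$ of $\mathbb R\setminus\{\lambda_1,\dots,\lambda_m\}$ I would choose $F(x)=-f'(x)/f(x)$, which is smooth on $U$. Substituting \eqref{eq45} and simplifying gives
\[
G(x,t)=\frac{\partial S}{\partial x}(x,t)+F(x)S(x,t)=-f(x)\sum_{i=1}^{m}\frac{c_i(t)}{(x-\lambda_i)^2}.
\]
The factor $f(x)$ has constant sign on $U$ and the sum inherits the common sign of the $c_i$'s, so $G$ is of constant sign on $U\times[0,1]$ and it is not identically zero on $U\times E$ for some non-empty open $E\subseteq[0,1]$. Lemma \ref{lem4}(ii) then yields (i).

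For the multiplicity statement in (ii), set $\lambda=\lambda_k$, so $c_k\equiv 0$ and $x(t,\lambda)\equiv\lambda_k$. Writing $S(x,t)=(x-\lambda_k)\tilde f(x)[a_m(t)+\sum_{i\neq k}c_i(t)/(x-\lambda_i)]$ with $\tilde f(x)=\prod_{i\neq k}(x-\lambda_i)$, I would expand $S_{xx}(\lambda_k,t)$ and rewrite it as
\[
S_{xx}(\lambda_k,t)=\frac{2\tilde f'(\lambda_k)}{\tilde f(\lambda_k)}S_x(\lambda_k,t)+\beta(t),\qquad \beta(t):=-2\tilde f(\lambda_k)\sum_{i\neq k}\frac{c_i(t)}{(\lambda_k-\lambda_i)^2}.
\]
The same sign count shows $\beta$ is of constant sign and not identically zero. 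Plugging this into \eqref{eq40} with the integrating factor $u(t)=\exp\int_0^t S_x(\lambda_k,s)ds$ (so $u'=S_x(\lambda_k,\cdot)u$, $u(0)=1$), the first piece integrates to $u(1)-u(0)=\dot H(\lambda_k)-1$, which vanishes whenever $\dot H(\lambda_k)=1$; hence if $\lambda_k$ is non-hyperbolic, $\ddot H(\lambda_k)=\int_0^1\beta(t)u(t)dt\neq 0$, and the multiplicity of $x(t,\lambda)\equiv\lambda$ is at most $2$.

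For the dichotomy between $U_1$ and $U_2$, I would argue by contradiction: assume there exist periodic solutions $x_1\in U_1$ and $x_2\in U_2$, both hyperbolic by (i). The sign of $G$ above switches across $\lambda_k$ (because $f$ does, while the sum keeps its sign), so $x_1,x_2$ have opposite stabilities. If $\lambda_k$ were hyperbolic, the alternation rule for consecutive hyperbolic periodic solutions would make $x_1$ and $x_2$ share a stability, a contradiction. Thus $\dot H(\lambda_k)=1$ and, by the previous paragraph, $\lambda_k$ has multiplicity exactly $2$ with $\ddot H(\lambda_k)$ of a definite sign. This fixes the sign of $H(x)-x\sim\tfrac{1}{2}\ddot H(\lambda_k)(x-\lambda_k)^2$ just above $\lambda_k$, while the stability of $x_2$ fixes the sign of $H(x)-x$ just below $x_2(0)$. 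The main bookkeeping, and the only real obstacle, is to check that the parities (common sign of $c_i$, sign of $f$ on $U_2$, sign of $\tilde f(\lambda_k)$, and stability of $x_2$) conspire to make these two signs opposite. Once this is verified, the intermediate value theorem produces an extra zero of $H-\mathrm{id}$ in $(\lambda_k,x_2(0))\subset U_2$, contradicting (i).
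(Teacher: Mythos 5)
Your part (i) coincides with the paper's own argument: the choice $F=-\dot f/f$, the Lagrange decomposition \eqref{eq45}, the identity $G=-f(x)\sum_i c_i(t)/(x-\lambda_i)^2$, and Lemma \ref{lem4}(ii). For part (ii), however, you take a genuinely different route. The paper proves both halves of (ii) by perturbation: it adds $\varepsilon_1 f(x)+\varepsilon_2 S(\lambda_p,t)(x-\lambda)^2\prod_{\lambda_j\neq\lambda,\lambda_p}(x-\lambda_j)$ to $S$, observes that the perturbed equation still satisfies {\bf (H)} so that statement (i) applies to it, and derives contradictions from how $\dot H(\lambda)$ and $\ddot H(\lambda)$ respond to $\varepsilon_1$ and $\varepsilon_2$ (a forced semi-stable solution that can be made hyperbolic; a multiplicity-$\geq 3$ solution that spawns two periodic solutions inside one component). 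Your route is direct and avoids perturbation entirely. Your identity
\[
S_{xx}(\lambda,t)=\frac{2\tilde f'(\lambda)}{\tilde f(\lambda)}S_x(\lambda,t)+\beta(t),\qquad
\beta(t)=-2\tilde f(\lambda)\sum_{i\neq k}\frac{c_i(t)}{(\lambda-\lambda_i)^2},
\]
is correct (it follows from $\tilde f'(\lambda)/(\lambda-\lambda_i)=\tilde f(\lambda)/(\lambda-\lambda_i)^2+g_i'(\lambda)$ with $g_i(x)=\tilde f(x)/(x-\lambda_i)$), the first term integrates against $u(t)$ to $\tfrac{2\tilde f'(\lambda)}{\tilde f(\lambda)}\bigl(\dot H(\lambda)-1\bigr)$, and $\beta$ is one-signed and not identically zero because all $c_i$ share a sign and some $c_i\not\equiv0$ with $i\neq k$. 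Hence $\dot H(\lambda)=1$ forces $\ddot H(\lambda)=\int_0^1\beta u\,dt\neq0$, which gives multiplicity at most $2$ more cleanly than the paper does.

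The one step you flagged but did not carry out --- that the sign of $\ddot H(\lambda)$ is actually incompatible with the stabilities of $x_1$ and $x_2$ --- is the crux of your dichotomy argument, and you must supply it; fortunately it does close. Write $\lambda=\lambda_k$, let $U_2$ be the component immediately to the right of $\lambda_k$, and normalize to the case $c_i(t)\geq0$ for all $i$ (the opposite case is symmetric). For $x\in U_2$ one has $\mathrm{sgn}\,f(x)=(-1)^{m-k}$, hence $\mathrm{sgn}\,G|_{U_2\times E}=(-1)^{m-k+1}$; by Lemma \ref{lem4}(ii), $x_2$ is unstable exactly when this sign is $+1$, so in either case $\mathrm{sgn}\bigl(H(x)-x\bigr)=-(-1)^{m-k+1}$ for $x$ just below $x_2(0)$. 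On the other hand $\mathrm{sgn}\,\tilde f(\lambda_k)=(-1)^{m-k}$, so $\mathrm{sgn}\,\beta=(-1)^{m-k+1}$ and, since $\dot H(\lambda_k)=1$,
\[
H(x)-x=\tfrac12\ddot H(\lambda_k)(x-\lambda_k)^2+o\bigl((x-\lambda_k)^2\bigr)
\]
has sign $(-1)^{m-k+1}$ just above $\lambda_k$. These two signs are opposite, so $H-\mathrm{id}$ vanishes somewhere in $\bigl(\lambda_k,x_2(0)\bigr)\subset U_2$, producing a second periodic solution in $U_2$ and contradicting (i). With this verification inserted, your proof is complete and, in my view, more economical than the paper's; the paper's perturbation machinery has the separate merit of being reused almost verbatim in the proof of Theorem \ref{thm1}.
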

\begin{proof}
Firstly, denote by
\begin{align*}
I_S(x,t)=\sum^{m}_{i=1}\frac{S(\lambda_i,t)}{\prod_{j\neq i}(\lambda_i-\lambda_j)}\cdot\frac{1}{(x-\lambda_i)^2},
\indent x\in\mathbb R\backslash\{\lambda_1,\cdots,\lambda_m\},\ t\in[0,1].
\end{align*}
Let $U$ be an arbitrary connected component of $\mathbb R\backslash\{\lambda_1,\cdots,\lambda_m\}$, and let $f(x)$ be defined as in \eqref{eq32}.
Taking
\begin{align*}
F(x)=-\frac{\dot{f}(x)}{f(x)}=-\sum^{m}_{i=1}\frac{1}{x-\lambda_i},
\indent x\in U
\end{align*}
in Lemma \ref{lem4}, we have by \eqref{eq45} that
\begin{align}\label{eq44}
\begin{split}
&G(x,t)=\frac{\partial S}{\partial x}(x,t)+F(x)\cdot S(x,t)\\
&\ \ \indent\indent
=-\left(\sum^{m}_{i=1}\frac{S(\lambda_i,t)}{\prod_{j\neq i}(\lambda_i-\lambda_j)}\cdot\frac{1}{(x-\lambda_i)^2}\right)\cdot f(x)\\
&\ \ \indent\indent\indent
+\left(a_m(t)+\sum^{m}_{i=1}\frac{S(\lambda_i,t)}{\prod_{j\neq i}(\lambda_i-\lambda_j)}\cdot\frac{1}{x-\lambda_i}\right)\cdot\dot{f}(x)\\
&\ \ \indent\indent\indent
-\left(a_m(t)+\sum^{m}_{i=1}\frac{S(\lambda_i,t)}{\prod_{j\neq i}(\lambda_i-\lambda_j)}\cdot\frac{1}{x-\lambda_i}\right)\cdot\dot{f}(x)\\
&\ \ \indent\indent
=-I_S(x,t)\cdot f(x).
\end{split}
\end{align}

Observe that $\lambda_1<\cdots<\lambda_m$ from Hypothesis {\bf (H)}. For each fixed $i\in\{1,\cdots,m\}$, we obtain
\begin{align*}
\#\{\lambda_j|\lambda_j>\lambda_i, j=1,\cdots,m\}=m-i,
\end{align*}
where $\#$ represents the cardinality of a set.
This implies that
\begin{align}\label{eq50}
\begin{split}
\text{sgn}\left(\frac{S(\lambda_i,t)}{\prod_{j\neq i}(\lambda_i-\lambda_j)}\right)
=\frac{\text{sgn}\left((-1)^i S(\lambda_i,t)\right)}{(-1)^i\prod_{j\neq i}\text{sgn}(\lambda_i-\lambda_j)}
=(-1)^{m}\cdot\text{sgn}\left((-1)^i S(\lambda_i,t)\right).
\end{split}
\end{align}
Thus, Hypothesis {\bf (H)} tells us that
\begin{align}\label{eq34}
I_S\left|_{(\mathbb R\backslash\{\lambda_1,\cdots,\lambda_m\})\times[0,1]}\geq0(\leq0)\right.,\ \
I_S\left|_{(\mathbb R\backslash\{\lambda_1,\cdots,\lambda_m\})\times E}\right.\neq0,
\end{align}
where
$$E=\big\{t\big|S(\lambda_1,t),\cdots,S(\lambda_m,t)\ \text{are\ not\ all\ zero}\big\}.$$
Furthermore, since $S(\lambda_1,t),\cdots,S(\lambda_m,t)$ are not
all identically zero, $E$ is a non-empty open set.
\vskip 0.3cm

Now we prove statements (i) and (ii) one by one.

(i) Clearly, $f|_U\neq0$. It follows from \eqref{eq44} and \eqref{eq34} that $G|_{U\times[0,1]}\geq0(\leq0)$ and $G|_{U\times E}\neq0$.
According to statement (ii) of Lemma \ref{lem4},
\eqref{eq1} has at most $1$ isolated periodic solutions in $U$, counted with multiplicities. As a result, statement (i) is valid.

 (ii) From assumption, there exists $\lambda_p\in\{\lambda_1,\cdots,\lambda_m\}$ such that $S(\lambda_p,t)\not\equiv0$. In what follows we consider a perturbation of \eqref{eq1}
\begin{align}\label{eq12}
\begin{split}
\frac{dx}{dt}&=S_{\varepsilon_1,\varepsilon_2}(x,t)\\
&=S(x,t)+\varepsilon_1 f(x)+\varepsilon_2 S(\lambda_p,t)\cdot(x-\lambda)^2\prod_{\lambda_j\neq\lambda,\lambda_p}(x-\lambda_j),
\end{split}
\end{align}
where
\begin{align*}
 \varepsilon_1\in\mathbb R,\indent
 \left|\varepsilon_2\right|<\left|\frac{1}{(\lambda_p-\lambda)^2\prod_{\lambda_j\neq\lambda,\lambda_p}(\lambda_p-\lambda_j)}\right|.
 \end{align*}
Observe that
\eqref{eq12} is of the form \eqref{eq1} with
\begin{align*}
&S_{\varepsilon_1,\varepsilon_2}(\lambda_i,t)=S(\lambda_i,t),\indent \lambda_i\in\{\lambda_1,\cdots,\lambda_m\}\backslash\{\lambda_p\},\\
&S_{\varepsilon_1,\varepsilon_2}(\lambda_p,t)=\left(1
+\varepsilon_2(\lambda_p-\lambda)^2\prod_{\lambda_j\neq\lambda,\lambda_p}(\lambda_p-\lambda_j)\right)S(\lambda_p,t).
\end{align*}
It also satisfies Hypothesis {\bf (H)} with $S_{\varepsilon_1,\varepsilon_2}(\lambda,t)\equiv0$ and $S_{\varepsilon_1,\varepsilon_2}(\lambda_p,t)\not\equiv0$. Therefore,
Statement (i) is usable for \eqref{eq12}.

Now assume for a contradiction that there exist two periodic solutions $x(t,x_1)$ and $x(t,x_2)$ of \eqref{eq1} (i.e. \eqref{eq12}$|_{\varepsilon_1=0,\varepsilon_2=0}$) contained in $U_1$ and $U_2$, respectively.
Since $U_1$, $\{\lambda\}$ and $U_2$ are three consecutive sets by assumption, $f|_{U_1}$ and $f|_{U_2}$ have opposite signs. According to \eqref{eq44}, \eqref{eq34} and statement (ii) of Lemma \ref{lem4} again,
$x(t,x_1)$, $x(t,x_2)$ are hyperbolic with different stabilities.

As a result,
\eqref{eq12} has three periodic solutions in $U_1\cup\{\lambda\}\cup U_2$ as $\varepsilon_1$ and $\varepsilon_2$ are small enough, where the middle one is
$x(t)=x(t,\lambda)\equiv\lambda$ and the other two are hyperbolic with different stabilities.
In addition, statement (i) tells us that these three periodic solutions are consecutive, which implies that $x(t)\equiv\lambda$ is always semi-stable for \eqref{eq12} as $\varepsilon_1$ and $\varepsilon_2$ are small enough.

However, a direct calculation shows that
\begin{align*}
\begin{split}
\frac{\partial S_{\varepsilon_1,\varepsilon_2}}{\partial x}(\lambda,t)
&=\frac{\partial S}{\partial x}(\lambda,t)+\varepsilon_1\dot{f}(x)\\
&=\frac{\partial S}{\partial x}(\lambda,t)+\varepsilon_1\prod_{\lambda_j\neq\lambda}(\lambda-\lambda_j).
\end{split}
\end{align*}
For the return map $H_{\varepsilon_1,\varepsilon_2}$ of \eqref{eq12}, we get by \eqref{eq40} that
\begin{align}\label{eq35}
\begin{split}
\dot{H}_{\varepsilon_1,\varepsilon_2}(\lambda)
&=\exp\left(\int^1_0\frac{\partial S}{\partial x}(\lambda,t)dt+\varepsilon_1\prod_{\lambda_j\neq\lambda}(\lambda-\lambda_j)\right)\\
&=\dot{H}_{0,0}(\lambda)\cdot\exp\left(\varepsilon_1\prod_{\lambda_j\neq\lambda}(\lambda-\lambda_j)\right).
\end{split}
\end{align}
Since $\prod_{\lambda_j\neq\lambda}(\lambda-\lambda_j)\neq0$, there always exists a sufficiently small $\varepsilon_0$ such that $\dot{H}_{\varepsilon_0,\varepsilon_2}(\lambda)\not=1$, i.e. $x(t)\equiv\lambda$ is hyperbolic for \eqref{eq12}$|_{\varepsilon_1=\varepsilon_0}$. This shows a contradiction. Therefore, equation \eqref{eq1} has no periodic solution in either $U_1$ or $U_2$.

We continue to use \eqref{eq12} to prove the rest part of statement (ii). Assume for a contradiction that $x(t)=x(t,\lambda)\equiv\lambda$ is a periodic solution of \eqref{eq1} (i.e. \eqref{eq12}$|_{\varepsilon_1=0,\varepsilon_2=0}$) with
multiplicity at least $3$. Then
$\dot H_{0,0}(\lambda)=1$ and $\ddot H_{0,0}(\lambda)=0$.
Hence for arbitrary $\varepsilon_2$, it follows from \eqref{eq35} that
\begin{align}\label{eq46}
\dot H_{0,\varepsilon_2}(\lambda)=\dot H_{0,0}(\lambda)=1.
\end{align}
Furthermore, together with \eqref{eq7}, \eqref{eq40} and \eqref{eq12}, we get
\begin{align}\label{eq47}
\begin{split}
 \ddot{H}_{0,\varepsilon_2}(\lambda)
 &=\dot H_{0,\varepsilon_2}(\lambda)\cdot
 \int^1_0\frac{\partial^2 S_{0,\varepsilon_2}}{\partial x^2}(\lambda,t)
 \cdot\exp\left(\int^t_0\frac{\partial S_{0,\varepsilon_2}}{\partial x}(\lambda,s)ds\right)dt\\
 &=\int^1_0\left(\frac{\partial^2 S}{\partial x^2}(\lambda,t)+2\varepsilon_2 S(\lambda_p,t)\prod_{\lambda_j\neq\lambda,\lambda_p}(\lambda-\lambda_j)\right)
\cdot\exp\left(\int^t_0\frac{\partial S}{\partial x}(\lambda,s)ds\right)dt\\
&=\ddot{H}_{0,0}(\lambda)+\int^1_0 2\varepsilon_2 S(\lambda_p,t)\prod_{\lambda_j\neq\lambda,\lambda_p}(\lambda-\lambda_j)
\cdot\exp\left(\int^t_0\frac{\partial S}{\partial x}(\lambda,s)ds\right)dt\\
&=2\varepsilon_2\prod_{\lambda_j\neq\lambda,\lambda_p}(\lambda-\lambda_j)\int^1_0 S(\lambda_p,t)
\cdot\exp\left(\int^t_0\frac{\partial S}{\partial x}(\lambda,s)ds\right)dt.
\end{split}
 \end{align}

Without loss of generality, suppose that $x(t)\equiv\lambda$
is stable from above for \eqref{eq1} (i.e. \eqref{eq12}$|_{\varepsilon_1=0,\varepsilon_2=0}$).
Note that
$S(\lambda_p,t)\prod_{\lambda_j\neq\lambda,\lambda_p}(\lambda-\lambda_j)\not\equiv0$.
By \eqref{eq47} we can choose a small $\varepsilon'_2$ such that $\ddot{H}_{0,\varepsilon'_2}(\lambda)>0$.
 Together with \eqref{eq46}, $x(t)\equiv\lambda$ becomes unstable from above, and therefore \eqref{eq12}$|_{\varepsilon_1=0,\varepsilon_2=\varepsilon'_2}$ has a periodic solution near and above $x(t)\equiv\lambda$. Recall that statement (i) is usable for \eqref{eq12}. This periodic solution is hyperbolic.

 However, following \eqref{eq35} again, $\dot H_{\varepsilon_1,\varepsilon'_2}(\lambda)<1$ for either $\varepsilon_1>0$ or $\varepsilon_1<0$. Thus there exists a small $\varepsilon'_1$ such that \eqref{eq12}$|_{\varepsilon_1=\varepsilon'_1,\varepsilon_2=\varepsilon'_2}$ has a second periodic solution near and above $x(t)\equiv\lambda$. As a result, there exist two periodic solutions of \eqref{eq12}$|_{\varepsilon_1=\varepsilon'_1,\varepsilon_2=\varepsilon'_2}$ contained in one connected components of $\mathbb R\backslash\{\lambda_1,\cdots,\lambda_m\}$. This contradicts to statement (i) for \eqref{eq12}. Based on the above argument, the multiplicity of $x(t)=x(t,\lambda)\equiv\lambda$ for \eqref{eq1} is no more than $2$.
\end{proof}

\begin{lem}\label{lem2}
Let $L\in\mathrm C^{1}(\mathbb R)$ and $p\in\mathrm C^{1}([0,1])$. The following statements hold.
\begin{itemize}
\item[(i)] If $x(t)$ is a solution of differential equation
\begin{align}\label{eq24}
\frac{dx}{dt}=p(t)L(x),
\end{align}
then for a point $t$ in the domain,
\begin{align}\label{eq21}
\mathrm{sgn}\big(x(t)-x(0)\big)
&=\mathrm{sgn}\left(\int^{t}_{0}p(s)ds\right)\cdot\mathrm{sgn}\bigg(L\big(x(0)\big)\bigg).
\end{align}
\item[(ii)] Assume that $E$ is a non-empty set in $[0,1]$, $q\in\mathrm C^{1}\big(\mathbb R\times[0,1]\big)$ and
\begin{align*}
 q\big|_{(a,b)\times[0,1]}\geq0(\leq0),\ q\big|_{(a,b)\times E}\neq0,
\end{align*}
where either $L(a)=0$ or $L(b)=0$. If
\begin{align}\label{eq22}
&\mathrm{sgn}\left(\int^1_0p(s)ds\right)\cdot\mathrm{sgn}\big(L(x)\big)\cdot\mathrm{sgn}\big(q(x,t)\big)\geq0,
&(x,t)\in(a,b)\times E,
\end{align}
then the differential equation
\begin{align}\label{eq23}
\frac{dx}{dt}=p(t)L(x)+q(x,t)
\end{align}
has no periodic solution in $(a,b)$. Furthermore, an arbitrary solution $\wt x(t)$ of \eqref{eq23}, with $\wt x(t)\in(a,b)$ for $t\in[0,1]$, satisfies
\begin{align}\label{eq51}
&\mathrm{sgn}\big(\wt x(1)-\wt x(0)\big)
=\mathrm{sgn}\big(q(x,t)\big),\indent (x,t)\in(a,b)\times E.
\end{align}
\end{itemize}
\end{lem}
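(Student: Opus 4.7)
The plan is to prove (i) by separation of variables, and (ii) by the change of coordinates $y=\Phi(x):=\int_{x_*}^x du/L(u)$, which converts \eqref{eq23} into $\dot y = p(t)+q(\tilde x,t)/L(\tilde x)$ whose time integral can be analyzed directly via hypothesis \eqref{eq22}.

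For (i), if $L(x(0))=0$ then $x\equiv x(0)$ already solves \eqref{eq24}, so by uniqueness (valid since $L\in C^1$) the solution is constant and both sides of \eqref{eq21} vanish. If instead $L(x(0))\neq 0$, uniqueness prevents $x(t)$ from ever reaching a zero of $L$, hence $L(x(s))$ retains the sign of $L(x(0))$ along the whole trajectory, and one may separate variables: $\int_{x(0)}^{x(t)}du/L(u)=\int_0^t p(s)\,ds$. A short case analysis on the direction of integration shows that the left-hand side has sign $\text{sgn}(x(t)-x(0))\cdot\text{sgn}(L(x(0)))$, yielding \eqref{eq21}.

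For (ii), assume without loss of generality that $L$ is nonvanishing on $(a,b)$ (otherwise one subdivides at each interior zero of $L$ and applies the argument separately on each open subinterval). Then $\text{sgn}(L)$ is a constant on $(a,b)$, and $\Phi(x)=\int_{x_*}^x du/L(u)$ (for any fixed $x_*\in(a,b)$) is $C^2$ and strictly monotone with $\text{sgn}(\Phi')=\text{sgn}(L)$. For any solution $\tilde x$ of \eqref{eq23} lying in $(a,b)$, substituting $y=\Phi(\tilde x)$ gives $\dot y=p(t)+q(\tilde x,t)/L(\tilde x)$, and integration yields
\[
y(1)-y(0)=\int_0^1 p(s)\,ds+\int_0^1\frac{q(\tilde x(t),t)}{L(\tilde x(t))}\,dt.
\]
Exploiting the fixed signs of $L$ on $(a,b)$ and of $q$ on $(a,b)\times[0,1]$, together with \eqref{eq22}, both summands above carry the common sign $\text{sgn}(\int_0^1 p)$. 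Hence $y(1)\neq y(0)$, which forces $\tilde x(1)\neq\tilde x(0)$, so \eqref{eq23} admits no periodic solution in $(a,b)$. Finally, the monotonicity of $\Phi$ gives
\[
\text{sgn}(\tilde x(1)-\tilde x(0))=\text{sgn}(L)\cdot\text{sgn}(y(1)-y(0))=\text{sgn}(L)\cdot\text{sgn}\!\left(\int_0^1 p\right)=\text{sgn}(q),
\]
the last equality coming from \eqref{eq22}; this is \eqref{eq51}.

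The main obstacle is the sign bookkeeping in the final step: one must use the nontriviality of $q$ on $(a,b)\times E$ to pin down $\text{sgn}(L)$ unambiguously on $(a,b)$, so that the $p$-integral and the $q/L$-integral reinforce rather than cancel; once this is secured, both (i) and (ii) reduce to the routine separation and monotonicity arguments above.
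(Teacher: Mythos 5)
Your part (i) is the paper's own argument verbatim (dispose of the case $L(x(0))=0$, then separate variables). For part (ii) you take a genuinely different route: you integrate the equation in the coordinate $y=\Phi(x)=\int^x du/L(u)$ and read the sign of $y(1)-y(0)$ directly off \eqref{eq22}, whereas the paper never transforms the equation --- it compares the given solution $\widetilde x$ of \eqref{eq23} with the solution $x$ of the unperturbed equation \eqref{eq24} issuing from the same initial point, and obtains $\widetilde x(1)>x(1)\ge x(0)=\widetilde x(0)$ from the differential inequality forced by $q\ge 0$ (with strict gain on $E$) together with statement (i) and \eqref{eq22}. When $L$ is nonvanishing on $(a,b)$ the two arguments are comparably short, and yours makes it slightly more transparent where \eqref{eq22} enters.

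Two points in your version need repair. First, the reduction ``WLOG $L\ne 0$ on $(a,b)$, otherwise subdivide at the interior zeros'' does not work as stated: if $L(c)=0$ for $c\in(a,b)$ then $|L(u)|\le C|u-c|$ near $c$ (as $L\in\mathrm C^1$), so $\Phi$ diverges at $c$ and $y=\Phi(\widetilde x)$ is undefined along any trajectory reaching $c$; moreover solutions of the perturbed equation \eqref{eq23} \emph{can} cross interior zeros of $L$ (only solutions of \eqref{eq24} cannot), so a periodic solution of \eqref{eq23} need not stay in one subinterval. The paper's comparison argument is insensitive to interior zeros of $L$ and proves the lemma as stated; your argument covers only the case actually used later in the paper ($L=f$ nonvanishing on a component of $\mathbb R\setminus\{\lambda_1,\cdots,\lambda_m\}$). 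Second, the link $\mathrm{sgn}\big(y(1)-y(0)\big)=\mathrm{sgn}\big(\int_0^1 p\big)$ in your final chain fails when $\int_0^1 p=0$: the right-hand side is then $0$ while the left-hand side is not. The correct intermediate statement is $\mathrm{sgn}\big(y(1)-y(0)\big)=\mathrm{sgn}(L)\cdot\mathrm{sgn}(q)$, because the $q/L$-integral is nonzero with exactly that sign and \eqref{eq22} only guarantees that the $p$-integral does not oppose it. With these fixes your conclusion \eqref{eq51} does come out correctly.
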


\begin{proof}
(i) Clearly if $L\big(x(0)\big)=0$, then $x(t)\equiv x(0)$. Equation \eqref{eq21} holds. If $L\big(x(0)\big)\neq0$, then $L\big(x(t)\big)\neq0$, i.e. $L\big(x(t)\big)$ does not change sign.
As a result, we have
\begin{align*}
\int^{x(t)}_{x(0)}\frac{dx}{L(x)}&=\int^{t}_{0}p(s)ds,
\end{align*}
which also implies equation \eqref{eq21}.

(ii) We suppose $L(a)=0$ and $q(x,t)\geq0$ in $(a,b)\times[0,1]$, without loss of generality (it is a similar argument for the rest cases). Let $\wt x(t)$ be defined as in assumption. Comparing \eqref{eq23} with \eqref{eq24}, we know that if $x(t)$ is a solution of \eqref{eq24} with $x(0)=\wt x(0)>a$, then $\wt x(t)\geq x(t)>a$ for $t\in [0,1]$. Observe that $q\big|_{(a,b)\times E}>0$. $\wt x(1)>x(1)$ is obtained. Moreover, it follows from \eqref{eq22} that
\begin{align*}
\mathrm{sgn}\left(\int^1_0p(s)ds\right)\cdot\mathrm{sgn}\big(L(x)\big)\geq0,\indent x\in(a,b).
\end{align*}
Combining \eqref{eq21} we get $\mathrm{sgn}\big(x(1)-x(0)\big)\geq0$. This implies that $\mathrm{sgn}\big(\wt x(1)-\wt x(0)\big)>0$ and
\eqref{eq51} hold. As a result, \eqref{eq23} has no periodic solution in $(a,b)$.
\end{proof}

\begin{prop}\label{lem3}
Assume that \eqref{eq1} satisfies Hypothesis {\bf (H)}. The following statements hold.
\begin{itemize}
\item[(i)] If $S(\lambda_1,t),\cdots,S(\lambda_m,t)$ are all identically zero, then
\eqref{eq1} either has no isolated periodic solutions, or has exactly $m$ isolated periodic solutions $x(t,\lambda_1)\equiv\lambda_1$, $\cdots$, $x(t,\lambda_m)\equiv\lambda_m$, counted with multiplicity.
\item[(ii)] If $S(\lambda_1,t),\cdots,S(\lambda_m,t)$ are not all identically zero, then \eqref{eq1} has no periodic solutions in either $(-\infty,\lambda_1)$ or $(\lambda_m\,+\infty)$.
\end{itemize}
\end{prop}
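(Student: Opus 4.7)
The plan is to reduce both statements to Lemma~\ref{lem2}, working from the Lagrange decomposition
\[
S(x,t)=a_m(t)f(x)+R(x,t),\qquad R(x,t)=\sum_{i=1}^{m}\frac{S(\lambda_i,t)}{\prod_{j\neq i}(\lambda_i-\lambda_j)}\prod_{j\neq i}(x-\lambda_j),
\]
already derived in \eqref{eq45}, with $f$ as in \eqref{eq32}.

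For statement~(i), since $S(\lambda_1,t)\equiv\cdots\equiv S(\lambda_m,t)\equiv 0$ the remainder $R$ vanishes and \eqref{eq1} reduces to the separable equation $\dot x=a_m(t)f(x)$. I would apply Lemma~\ref{lem2}(i) with $p=a_m$ and $L=f$, which turns the periodicity condition into $\mathrm{sgn}(\int_0^1 a_m\,ds)\cdot\mathrm{sgn}(f(x_0))=0$. If $\int_0^1 a_m\,ds=0$, every solution is periodic and no isolated periodic solution exists; otherwise $f(x_0)=0$ is forced, so $x_0\in\{\lambda_1,\dots,\lambda_m\}$, and \eqref{eq40} yields $\dot H(\lambda_i)=\exp(f'(\lambda_i)\int_0^1 a_m\,ds)\neq 1$, showing that each $x\equiv\lambda_i$ is a simple hyperbolic periodic solution, for a total of exactly $m$ isolated periodic solutions.

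For statement~(ii), I would take without loss of generality the ``$\geq$''-branch of Hypothesis~(H) and apply Lemma~\ref{lem2}(ii) with $p(t)=a_m(t)$, $L(x)=f(x)$ and $q(x,t)=R(x,t)$; observe that $L(\lambda_1)=L(\lambda_m)=0$ delivers the required endpoint zero. A short parity count along the lines of \eqref{eq50}, combining the signs of $\prod_{j\neq i}(x-\lambda_j)$ and of $\prod_{j\neq i}(\lambda_i-\lambda_j)$ with Hypothesis~(H), shows that every summand of $R$ is non-positive on $(-\infty,\lambda_1)$, while every summand of $R$ has sign $(-1)^m$ on $(\lambda_m,+\infty)$. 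Since some $S(\lambda_i,t)\not\equiv 0$, these inequalities are strict on the non-empty open set $E=\{t\in[0,1]:S(\lambda_1,t),\dots,S(\lambda_m,t)\text{ are not all zero}\}$, which supplies the ``$q\neq 0$ on $E$'' requirement of Lemma~\ref{lem2}(ii).

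The remaining sign test in Lemma~\ref{lem2}(ii) then reduces to
\[
\mathrm{sgn}\Bigl(\int_0^1 a_m\,ds\Bigr)\cdot(-1)^{m+1}\geq 0 \text{ on }(-\infty,\lambda_1),\qquad \mathrm{sgn}\Bigl(\int_0^1 a_m\,ds\Bigr)\cdot(-1)^{m}\geq 0 \text{ on }(\lambda_m,+\infty).
\]
The punchline is that $(-1)^{m+1}$ and $(-1)^m$ have opposite signs, so whatever value $\int_0^1 a_m\,ds$ takes, at least one of these two conditions is satisfied; Lemma~\ref{lem2}(ii) then excludes periodic solutions of~\eqref{eq1} from the corresponding outer component. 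The main obstacle I anticipate is the sign bookkeeping itself: one has to juggle the three parity contributions (from Hypothesis~(H), from $\prod_{j\neq i}(\lambda_i-\lambda_j)$, and from $\prod_{j\neq i}(x-\lambda_j)$) on each of the two outer components, and then recognise that the two resulting sign tests are exactly complementary in $m$, so that no a priori hypothesis on $\int_0^1 a_m\,ds$ is required to close the argument.
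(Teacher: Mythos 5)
Your proposal is correct and follows essentially the same route as the paper: both parts rest on the decomposition \eqref{eq45} (you write the remainder as the Lagrange polynomial $R$, the paper as $R_S\cdot f$, which is the same object), both reduce statement (i) to $\dot x=a_m(t)f(x)$ via Lemma~\ref{lem2}(i) with the same hyperbolicity check, and both prove statement (ii) by feeding the sign bookkeeping of \eqref{eq50} into Lemma~\ref{lem2}(ii) and observing that the two resulting sign tests on the outer components are complementary, so at least one always passes. No gaps.
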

\begin{proof}
Let $a_m(t)$ and $f(x)$ be defined as in \eqref{eq1} and \eqref{eq32}, respectively.

(i) If $S(\lambda_1,t),\cdots,S(\lambda_m,t)$ are all identically zero, then \eqref{eq45} tells us that $S(x,t)=a_m(t)f(x)$. Using statement (i) of Lemma \ref{lem2}, we obtain the statements below.
\begin{itemize}
\item[(a)] When $\int^1_0a_m(t)dt=0$, a solution of \eqref{eq1} is periodic if it is well-defined in $[0,1]$.
\item[(b)] When $\int^1_0a_m(t)dt\neq0$, \eqref{eq1} has exactly $m$ periodic solutions $x(t,\lambda_1)\equiv\lambda_1$, $\cdots$, $x(t,\lambda_m)\equiv\lambda_m$.
\end{itemize}
In addition, since $\lambda_1,\cdots,\lambda_m$ are $m$ consecutive simple zeros of $f(x)$, we get $\dot f(\lambda_i)\neq0$ for $i=1,\cdots,m$. Hence for the case in statement (b) and the return map $H$ of \eqref{eq1},
it follows from \eqref{eq40} that
$$\dot H(\lambda_i)=\exp\left(\dot f(\lambda_i)\int^1_0a_m(t)dt\right)\neq1,\indent i=1,\cdots,m,$$
i.e. each $x(t,\lambda_i)\equiv\lambda_i$ is hyperbolic. As a result, statement (i) is valid.

(ii) First, if $S(\lambda_1,t),\cdots,S(\lambda_m,t)$ are not all identically zero, then the set $$E=\big\{t\big|S(\lambda_1,t),\cdots,S(\lambda_m,t)\ \text{are\ not\ all\ zero}\big\}$$ is non-empty.

Second, define
\begin{align*}
R_S(x,t)=\sum^{m}_{i=1}\frac{S(\lambda_i,t)}{\prod_{j\neq i}(\lambda_i-\lambda_j)}\cdot
\frac{1}{x-\lambda_i},\indent x\in\mathbb R\backslash\{\lambda_1,\cdots,\lambda_m\},\ t\in[0,1].
\end{align*}
According to \eqref{eq45}, we rewrite \eqref{eq1} in
\begin{align}\label{eq48}
\dot{x}=S(x,t)=a_m(t)f(x)+R_S(x,t) f(x)
\end{align}
and then compare it with $\dot x=a_m(t)f(x)$.
Similar to the proof of Proposition \ref{lem5}, we know by $\lambda_1<\cdots<\lambda_m$ that \eqref{eq50} holds, i.e.
\begin{align*}
\begin{split}
\text{sgn}\left(\frac{S(\lambda_i,t)}{\prod_{j\neq i}(\lambda_i-\lambda_j)}\right)
=(-1)^{m}\cdot\text{sgn}\left((-1)^i S(\lambda_i,t)\right),\indent i=1,\cdots,m.
\end{split}
\end{align*}
Therefore from Hypothesis {\bf (H)},
\begin{align*}
\text{sgn}\big(R_S(x,t)\big)
=\left\{
\begin{aligned}
&\text{sgn}\left((-1)^{m}\cdot\sum^{m}_{i=1}(-1)^{i}S(\lambda_i,t)\right), &x>\lambda_m,\\
&-\text{sgn}\left((-1)^{m}\cdot\sum^{m}_{i=1}(-1)^{i}S(\lambda_i,t)\right), &x<\lambda_1.
\end{aligned}
\right.
\end{align*}
Without loss of generality, suppose that $(-1)^{m+i} S(\lambda_i,t)\geq0$ for $i=1,\cdots,m$ (it is similar for the opposite case).
Then
\begin{align*}
R_S\left|_{(-\infty,\lambda_1)\times[0,1]}\leq0\right.,\ \
R_S\left|_{(\lambda_m\,+\infty)\times[0,1]}\geq0\right.,\ \
R_S\left|_{((-\infty,\lambda_1)\cup(+\infty,\lambda_m))\times E}\right.\neq0.
\end{align*}
Thus, we have
\begin{align*}
R_Sf\left|_{(-\infty,\lambda_1)\times[0,1]}\leq0(\geq0)\right.,\ \
R_Sf\left|_{(\lambda_m\,+\infty)\times[0,1]}\geq0\right.,\ \
R_Sf\left|_{((-\infty,\lambda_1)\cup(+\infty,\lambda_m))\times E}\right.\neq0,
\end{align*}
and
\begin{align*}
\begin{split}
\text{sgn}&\left(\int_0^1a_m(t)dt\right)\cdot\text{sgn}\big(f(x)\big)\cdot
\text{sgn}\big(R_S(x,t) f(x)\big)\\
&=\left\{
\begin{aligned}
&\text{sgn}\left(\int_0^1a_m(t)dt\right), &x>\lambda_m,\ t\in E,\\
&-\text{sgn}\left(\int_0^1a_m(t)dt\right), &x<\lambda_1,\ t\in E.
\end{aligned}
\right.
\end{split}
\end{align*}
According to \eqref{eq48} and statement (ii) of Lemma \ref{lem2},
\eqref{eq1} has no periodic solutions located in either $(-\infty,\lambda_1)$ or $(\lambda_m\,+\infty)$.
\end{proof}

\section{Proof of Theorem \ref{thm1}}\label{an
apprpriate label}
By virtue of the results given in the previous sections, we now begin to prove Theorem \ref{thm1}.

\begin{proof}[Proof of Theorem \ref{thm1}]
According to statement (i) of Proposition \ref{lem3}, the assertion is valid if $S(\lambda_1,t)=\cdots=S(\lambda_m,t)\equiv0$.
In what follows we prove the case that $S(\lambda_1,t),$ $\cdots,$ $S(\lambda_m,t)$ are not all identically zero.

Firstly, by assumption and Proposition \ref{lem5}, each periodic solution of \eqref{eq1} is either located in $\mathbb R\backslash\{\lambda_1,\cdots,\lambda_m\}$ with multiplicity $1$, or equal identically to one of $\lambda_1,\cdots,\lambda_m$ with multiplicity at most $2$.

Secondly, let $f(x)$ be defined as in \eqref{eq32}. Consider the following perturbation of \eqref{eq1}
\begin{align}\label{eq49}
\begin{split}
\frac{dx}{dt}&=\mathcal{S}_{\varepsilon_1,\varepsilon_2}(x,t)\\
&=S(x,t)+\varepsilon_1 f(x)+\varepsilon_2\dot f(x).
\end{split}
\end{align}
We claim that if $x(t)\equiv\lambda\in\{\lambda_1,\cdots,\lambda_m\}$ is a periodic solution of \eqref{eq1} (i.e. \eqref{eq49}$|_{\varepsilon_1=0,\varepsilon_2=0}$) with  multiplicity $2$, then the statements below hold for \eqref{eq49} as $\varepsilon_1\neq0$ is sufficiently small and $\varepsilon_2=0$.
\begin{itemize}
\item[(a)] $x(t)\equiv\lambda$ is a hyperbolic periodic solution of \eqref{eq49}.
\item[(b)] There exists a hyperbolic periodic solution of \eqref{eq49} near $x(t)\equiv\lambda$.
\end{itemize}
In fact, a direct calculation shows that
\begin{align}\label{eq13}
\begin{split}
&\mathcal{S}_{\varepsilon_1,0}(\lambda_i,t)=S(\lambda_i,t),\\
&\frac{\partial \mathcal{S}_{\varepsilon_1,0}}{\partial x}(\lambda_i,t)
=\frac{\partial S}{\partial x}(\lambda_i,t)+\varepsilon_1\dot f(\lambda_i),\\
& i=1,\cdots,m.
\end{split}
\end{align}
Hence $\mathcal{S}_{\varepsilon_1,0}(\lambda,t)=S(\lambda,t)\equiv0$, and  \eqref{eq40} tells us that
\begin{align*}
\dot{\mathcal{H}}_{\varepsilon_1,0}(\lambda)
&=\exp\left(\int^1_0\frac{\partial S}{\partial x}(\lambda,t)dt+\varepsilon_1\dot f(\lambda)\right)\\
&=\dot{\mathcal{H}}_{0,0}(\lambda)\cdot\exp\left(\varepsilon_1\dot f(\lambda)\right)\\
&=\exp\left(\varepsilon_1\dot f(\lambda)\right),
\end{align*}
where $\mathcal{H}_{\varepsilon_1,\varepsilon_2}$ represents the return map of \eqref{eq49}.
Observe that $\lambda_1,\cdots,\lambda_m$ are $m$ consecutive simple zeros of $f(x)$. We have
\begin{align}\label{eq43}
\text{sgn}\big(-\dot f(\lambda_1)\big)=\cdots=\text{sgn}\big((-1)^i\dot f(\lambda_i)\big)
=\cdots=\text{sgn}\big((-1)^m\dot f(\lambda_m)\big)\neq0.
\end{align}
Therefore, when $\varepsilon_1\neq0$ is small enough and $\varepsilon_2=0$, statement (a) holds, and \eqref{eq49} has a periodic solution near $x\equiv\lambda$.
Clearly, \eqref{eq49} is of the form \eqref{eq1}, and it satisfies Hypothesis {\bf (H)} by \eqref{eq13} as $\varepsilon_2=0$. According to statement (i) of Proposition \ref{lem5}, this periodic solution is hyperbolic.
As a result, statement (b) is obtained.


Now suppose that \eqref{eq1} has $s$ isolated periodic solutions, counted with multiplicities. From the above discussion, there exists a small $\varepsilon'_1\neq0$ such that \eqref{eq49}$|_{\varepsilon_1=\varepsilon'_1,\varepsilon_2=0}$ has at least $s$ hyperbolic periodic solutions.


On the other hand, we also get by \eqref{eq49} that
\begin{align*}
\mathcal{S}_{\varepsilon_1,\varepsilon_2}(\lambda_i,t)=S(\lambda_i,t)+\varepsilon_2\dot f(\lambda_i),\indent i=1,\cdots, m.
\end{align*}
Hence Hypothesis {\bf (H)} and \eqref{eq43} tell us that for either $\varepsilon_2>0$ or $\varepsilon_2<0$,
\begin{align*}
\text{sgn}\big(-\mathcal{S}_{\varepsilon_1,\varepsilon_2}(\lambda_1,t)\big)
=\cdots=\text{sgn}\big((-1)^i\mathcal{S}_{\varepsilon_1,\varepsilon_2}(\lambda_i,t)\big)
=\cdots=\text{sgn}\big((-1)^m\mathcal{S}_{\varepsilon_1,\varepsilon_2}(\lambda_m,t)\big)\neq0.
\end{align*}
This implies that for either $\varepsilon_2>0$ or $\varepsilon_2<0$, \eqref{eq49} satisfies Hypothesis {\bf (H)}, with all the periodic solutions contained in $\mathbb R\backslash\{\lambda_1,\cdots,\lambda_m\}$. Following statement (i) of Proposition \ref{lem5} and statement (ii) of Proposition \ref{lem3}, \eqref{eq49} has at most $m$ hyperbolic periodic solutions for either $\varepsilon_2>0$ or $\varepsilon_2<0$.

Based on the above, there exists a small $\varepsilon'_2\neq0$ such that
\begin{itemize}
\item[(c)] Equation \eqref{eq49}$|_{\varepsilon_1=\varepsilon'_1,\varepsilon_2=\varepsilon'_2}$ has at least $s$ hyperbolic periodic solutions.
\item[(d)] Equation \eqref{eq49}$|_{\varepsilon_1=\varepsilon'_1,\varepsilon_2=\varepsilon'_2}$ has at most $m$ hyperbolic periodic solutions.
\end{itemize}
We get $s\leq m$.

Finally, taking $S(x,t)=f(x)$, it is easy to know that $x_1(t)\equiv\lambda_1,\cdots,x_m(t)\equiv\lambda_m$ are $m$ isolated periodic solutions of the equation. Thus, the upper bound is sharp.

The proof of Theorem \ref{thm1} is finished.
\end{proof}

Here we give two concrete examples as a direct application of Theorem \ref{thm1}.

The first one shows again that the upper bound in the theorem is reachable. Consider equation
\begin{align}\label{eq16}
\frac{dx}{dt}=S(x,t)=x^4+x^3-13x^2-x+12.
\end{align}
Taking $\lambda_1=-2$, $\lambda_2=0$, $\lambda_3=2$ and $\lambda_4=4$, we have
\begin{align*}
S(\lambda_1,t)=-30<0,\ \ S(\lambda_2,t)=12>0, \ \ S(\lambda_3,t)=-18<0,\ \ S(\lambda_4,t)=120>0,
\end{align*}
which implies that $(-1)^i\cdot S(\lambda_i,t)>0$ for $i=1,2,3,4$. Thus Theorem \ref{thm1} tells us that \eqref{eq16} has at most $4$ isolated periodic solutions, counted with multiplicities. In fact, one can check that \eqref{eq16} has $4$ isolated periodic solutions $x_1(t)\equiv-4$, $x_2(t)\equiv-1$, $x_3(t)\equiv1$ and $x_4(t)\equiv3$. The upper bound is clearly reached.

The second one considers an equation where all the coefficients with respect to $x$ change signs
\begin{align}\label{eq17}
\begin{split}
\frac{dx}{dt}&=S(x,t)\\
&=\big(1+5\cos(2\pi t)\big)x^3-15\cos(2\pi t)x^2-\big(4+5\cos(2\pi t)\big)x+15\cos(2\pi t).
\end{split}
\end{align}
If we take $\lambda_1=-1$, $\lambda_2=1$ and $\lambda_3=3$, then
\begin{align}\label{eq18}
S(\lambda_1,t)=3>0,\ \ S(\lambda_2,t)=-3<0,\ \ S(\lambda_3,t)=15>0,
\end{align}
which means $(-1)^i\cdot S(\lambda_i,t)<0$ for $i=1,2,3$.
Following Theorem \ref{thm1}, \eqref{eq17} has at most $3$ isolated periodic solutions, counted with multiplicities. Furthermore, we can know by \eqref{eq18} that \eqref{eq17} has at least $2$ isolated periodic solutions $x_1(t)$ and $x_2(t)$ located in intervals $(-1,1)$ and $(1,3)$, respectively.


\section{Application on trigonometrical generalized Abel equation}\label{an
apprpriate label}
In this section we mainly prove Corollary \ref{example1} and Proposition \ref{prop1}. We also give an example, in which all the coefficients with respect to
$x$ change signs, to show the application of our result.

\begin{proof}[Proof of Corollary \ref{example1}]
It is clear that \eqref{eq28} can be rewritten in
\begin{align}\label{eq29}
\begin{split}
\frac{dx}{dt}&=S(x,t)\\
&=f_{\text{\bf{\em a}}}(x)+f_{\text{\bf{\em b}}}(x)\cos(2\pi t)+f_{\text{\bf{\em c}}}(x)\sin(2\pi t)\\
&=f_{\text{\bf{\em a}}}(x)+\sqrt{f_{\text{\bf{\em b}}}^2(x)+f_{\text{\bf{\em c}}}^2(x)}\sin\big(\theta(x)+2\pi t\big),
\end{split}
\end{align}
where
\begin{align}\label{eq19}
\theta(x)=
\left\{
\begin{aligned}
&\arcsin \left(f_{\text{\bf{\em b}}}(x)\bigg/\sqrt{f_{\text{\bf{\em b}}}^2(x)+f_{\text{\bf{\em c}}}^2(x)}\right),&f_{\text{\bf{\em b}}}^2(x)+f_{\text{\bf{\em c}}}^2(x)\neq0,\\
&0,&f_{\text{\bf{\em b}}}^2(x)+f_{\text{\bf{\em c}}}^2(x)=0.
\end{aligned}
\right.
\end{align}
By condition {\bf (C)}, we have
\begin{align}\label{eq20}
S(\kappa_i,t)=f_{\text{\bf{\em a}}}(\kappa_i)\bigg(1+\text{sgn}(f_{\text{\bf{\em a}}}(\kappa_i))\cdot\sin\big(\theta(\kappa_i)+2\pi t\big)\bigg),\indent i=1,\cdots,m.
\end{align}
Hence either $(-1)^i\cdot S(\kappa_i,t)\geq0$ for $i=1,\cdots,m$, or  $(-1)^i\cdot S(\kappa_i,t)\leq0$ for $i=1,\cdots,m$.
According to Theorem \ref{thm1}, \eqref{eq28} has at most $m$ isolated periodic solutions, counted with multiplicities.
\end{proof}

For instance, consider equation
\begin{align}\label{eq87}
\begin{split}
\frac{dx}{dt}&=S(x,t)\\
&=\big(3+5\cos(2\pi t)\big)x^4-\big(15+17\cos(2\pi t)\big)x^3\\
&\indent-2\big(3+10\cos(2\pi t)\big)x^2+\big(45+47\cos(2\pi t)\big)x+6\big(3+5\cos(2\pi t)\big).
\end{split}
\end{align}
Obviously, \eqref{eq87} is of the form \eqref{eq28}. It is easy to know by Corollary \ref{example1} that
\begin{align*}
&f_{\text{\bf{\em a}}}(x)=3x^4-15x^3-6x^2+45x+18,\\
&f_{\text{\bf{\em b}}}(x)=5x^4-17x^3-20x^2+47x+30,\\
&f_{\text{\bf{\em c}}}(x)=0.
\end{align*}
Hence,
\begin{align*}
f_{\text{\bf{\em a}}}^2(x)-f_{\text{\bf{\em b}}}^2(x)-f_{\text{\bf{\em c}}}^2(x)&=\big(f_{\text{\bf{\em a}}}(x)-f_{\text{\bf{\em b}}}(x)\big)\big(f_{\text{\bf{\em a}}}(x)+f_{\text{\bf{\em b}}}(x)\big).
\end{align*}
From a direct calculation,
\begin{align*}
f_{\text{\bf{\em a}}}(x)-f_{\text{\bf{\em b}}}(x)&=-2\left(x^4-x^3-7x^2+x+6\right)\\
&=-2(x+2)(x+1)(x-1)(x-3),
\end{align*}
and
\begin{align}\label{eq25}
f_{\text{\bf{\em a}}}(-2)=72>0,\ f_{\text{\bf{\em a}}}(-1)=-15<0,\ f_{\text{\bf{\em a}}}(1)=45>0,\ f_{\text{\bf{\em a}}}(3)=-63<0.
\end{align}
If we take
\begin{align*}
\kappa_1=-2,\ \kappa_2=-1,\ \kappa_3=1,\ \kappa_4=3,
\end{align*}
then $\kappa_1<\cdots<\kappa_4$ is $4$ zeros of $f_{\text{\bf{\em a}}}^2(x)-f_{\text{\bf{\em b}}}^2(x)-f_{\text{\bf{\em c}}}^2(x)$, and $(-1)^i f_{\text{\bf{\em a}}}(\kappa_i)<0$ for $i=1,\cdots,4$.
Following Corollary \ref{example1}, the number of isolated periodic solutions of \eqref{eq87} is no more than 4, counted with multiplicities.

Moreover, according to \eqref{eq20} in the proof of Corollary \ref{example1} and \eqref{eq25}, for equation \eqref{eq87} we know that
\begin{align}\label{eq27}
\begin{split}
&S(-2,t)\not\equiv0,\ \  S(-1,t)\not\equiv0,\ \ S(1,t)\not\equiv0,\ \ S(3,t)\not\equiv0,\\
&S(-2,t)\geq0,\ \  S(-1,t)\leq0,\ \ S(1,t)\geq0,\ \ S(3,t)\leq0.
\end{split}
\end{align}
Therefore \eqref{eq87} has at least $3$ isolated periodic solutions $x_1(t)$, $x_2(t)$ and $x_3(t)$ located in intervals $(-2,-1)$, $(-1,1)$ and
$(1,3)$, respectively.

\begin{rem}
As seen above, only the $4$ zeros of $f_{\text{\bf{\em a}}}(x)-f_{\text{\bf{\em b}}}(x)$ are used in the discussion for equation \eqref{eq87}. Observe that
\begin{align*}
f_{\text{\bf{\em a}}}(x)+f_{\text{\bf{\em b}}}(x)
&=2(2x+3)(2x+1)(x-2)(x-4).
\end{align*}
Then $-3/2$, $-1/2$, $2$ and $4$ are the other $4$ zeros of $f_{\text{\bf{\em a}}}^2(x)-f_{\text{\bf{\em b}}}^2(x)-f_{\text{\bf{\em c}}}^2(x)$. By a direct calculation,
\begin{align*}
f_{\text{\bf{\em a}}}(-\frac{3}{2})=\frac{45}{16}>0,\ \ f_{\text{\bf{\em a}}}(-\frac{1}{2})=-\frac{63}{16}<0,\ \ f_{\text{\bf{\em a}}}(2)=12>0,\ \ f_{\text{\bf{\em a}}}(4)=-90<0.
\end{align*}
Hence, for \eqref{eq87} these $4$ zeros also satisfy condition {\bf (C)}. In addition,
again following \eqref{eq20} in the proof of Corollary \ref{example1}, we get
\begin{align*}
&S(-\frac{3}{2},t)\not\equiv0,\ \  S(-\frac{1}{2},t)\not\equiv0,\ \ S(2,t)\not\equiv0,\ \ S(4,t)\not\equiv0,\\
&S(-\frac{3}{2},t)\geq0,\ \  S(-\frac{1}{2},t)\leq0,\ \ S(2,t)\geq0,\ \ S(4,t)\leq0.
\end{align*}
Together with \eqref{eq27}, the intervals where the isolated periodic solutions $x_1(t)$, $x_2(t)$ and $x_3(t)$ of \eqref{eq87} located can be improved to $(-3/2,-1)$, $(-1/2,1)$ and $(2,3)$, respectively.
\end{rem}

\begin{rem}
We emphasize that equation \eqref{eq87} is a generalized Abel equation which has no coefficients with respect to $x$ keeping signs.
\end{rem}

\begin{proof}[Proof of Proposition \ref{prop1}]
Firstly, if equation \eqref{eq28} has constant periodic solutions, then the assertion is valid. In what follows we prove the case that \eqref{eq28} has no constant periodic solutions, i.e. $S(x,t)\not\equiv0$ for arbitrary fixed $x\in\mathbb R$.

By assumption, suppose without loss of generality that $(-1)^i\cdot S(\lambda_i,t)\geq0$ for $i=1,\cdots,m$ (it is the same argument for the opposite case). Let $\mathcal{F}(x)=f_{\text{\bf{\em a}}}^2(x)-f_{\text{\bf{\em b}}}^2(x)-f_{\text{\bf{\em c}}}^2(x)$. We claim that
\begin{align}\label{eq36}
\mathcal{F}(\lambda_i)=f_{\text{\bf{\em a}}}^2(\lambda_i)-f_{\text{\bf{\em b}}}^2(\lambda_i)-f_{\text{\bf{\em c}}}^2(\lambda_i)\geq0,\ (-1)^i\cdot f_{\text{\bf{\em a}}}(\lambda_i)>0,\indent i=1,\cdots,m.
\end{align}
In fact, similar to the proof of Corollary \ref{example1}, $S(x,t)$ can be rewritten in
$$S(x,t)=f_{\text{\bf{\em a}}}(x)+\sqrt{f_{\text{\bf{\em b}}}^2(x)+f_{\text{\bf{\em c}}}^2(x)}\sin\big(\theta(x)+2\pi t\big),$$
where $\theta(x)$ is defined as in \eqref{eq19}. Then for each $i=1,\cdots,m$, we get
\begin{align*}
&|f_{\text{\bf{\em a}}}(\lambda_i)|\geq\sqrt{f_{\text{\bf{\em b}}}^2(\lambda_i)+f_{\text{\bf{\em c}}}^2(\lambda_i)}\
\text{(otherwise $S(\lambda_i,t)$ changes sign on $[0,1]$)},\ \text{i.e.}\
\mathcal{F}(\lambda_i)\geq0,\\
&(-1)^i\cdot f_{\text{\bf{\em a}}}(\lambda_i)\geq0.
\end{align*}
Furthermore, note that $f_{\text{\bf{\em b}}}^2(\lambda_i)+f_{\text{\bf{\em c}}}^2(\lambda_i)=0$ if $f_{\text{\bf{\em a}}}(\lambda_i)=0$, i.e.
$S(\lambda_i,t)\equiv0$ if $f_{\text{\bf{\em a}}}(\lambda_i)=0$. We have
$(-1)^i\cdot f_{\text{\bf{\em a}}}(\lambda_i)>0$ for $i=1,\cdots,m$.
Hence, \eqref{eq36} is true.

As a result, each open interval $(\lambda_i,\lambda_{i+1})$ contains an odd number of zeros (counted with multiplicities) of $f_{\text{\bf{\em a}}}(x)$, $i=1,\cdots,m-1$. Since $f_{\text{\bf{\em a}}}(x)$ is of degree no more than $m$, it actually has only one zero $\lambda'_i$ (counted with multiplicities) in $(\lambda_i,\lambda_{i+1})$, $i=1,\cdots,m-1$. Therefore, $f_{\text{\bf{\em a}}}(x)$ has exactly $m-1$ simple zeros $\lambda'_1<\cdots<\lambda'_{m-1}$ in $(\lambda_1,\lambda_m)$. If we take $I_1=(\lambda_1,\lambda'_1)$, $I_m=(\lambda'_{m-1},\lambda_m)$ and $I_{i}=(\lambda'_{i-1},\lambda'_{i})$ for $i=2,\cdots,m-1$, then
\begin{align}\label{eq38}
(-1)^i\cdot f_{\text{\bf{\em a}}}\left|_{I_i}\right.>0,\indent i=1,\cdots,m.
\end{align}

On the other hand, 
 again since $S(x,t)\not\equiv0$ for arbitrary fixed $x\in\mathbb R$,
 $f_{\text{\bf{\em a}}}(x)$ and $f_{\text{\bf{\em b}}}^2(x)+f_{\text{\bf{\em c}}}^2(x)$ have no common zeros. For this reason,
\begin{align}\label{eq37}
\mathcal{F}(\lambda'_i)=-\left(f_{\text{\bf{\em b}}}^2(\lambda'_i)+f_{\text{\bf{\em c}}}^2(\lambda'_i)\right)<0,\indent i=1,\cdots,m-1.
\end{align}

Now define $I'_i=[\lambda_i,\lambda'_i)$ for $i=1,\cdots,m-1$ and $I'_m=(\lambda'_{m-1},\lambda_m]$.
From \eqref{eq36} and \eqref{eq37}, there exists $\kappa_i\in I'_i$ such that $\mathcal{F}(\kappa_i)=0$ for $i=1,\cdots,m$. Furthermore, noting that
$I'_i\subseteq \overline{I_i}$, we know by \eqref{eq38} that
\begin{align*}
(-1)^i\cdot f_{\text{\bf{\em a}}}(\kappa_i)\geq0,\indent i=1,\cdots,m.
\end{align*}
Thus, condition {\bf (C)} holds, and the proof of Proposition \ref{prop1} is finished.
\end{proof}

\section{Geometric hypotheses for generalized Abel equation}\label{an
apprpriate label}

Firstly, let $a(t), b(t)$ be two smooth functions on interval $[0,1]$ with $a(0)=a(1)$, $b(0)=b(1)$ and $a(t)>0$. Let $\gamma_{\lambda}$ be defined as in section 1, i.e. $\gamma_{\lambda}:\big(t,\lambda a(t)+b(t)\big)$ is a family of curves lying in $[0,1]\times\mathbb R$ with parameter $\lambda\in\mathbb R$.

We know that \eqref{eq1} has an invariant form under the transformation $y=\big(x-b(t)\big)/a(t)$. In fact, this transformation changes \eqref{eq1} into
\begin{align}\label{eq31}
\begin{split}
\frac{dy}{dt}&=\wt S(y,t)\\
&=\frac{1}{a(t)}\left(S\big(a(t)y+b(t),t\big)-\dot a(t)y-\dot b(t)\right).
\end{split}
\end{align}
Since
\begin{align*}
S\big(a(t)y+b(t),t\big)
=\sum^m_{i=0}a_i(t)\big(a(t)y+b(t)\big)^i,
\end{align*}
$\wt S(y,t)$ is a polynomial in $y$ of degree no more than $m$. Therefore \eqref{eq31} is of the form \eqref{eq1}.


Now consider the vector field $v_S=\big(1,S(x,t)\big)$ induced by \eqref{eq1}. One can check by \eqref{eq31} that
\begin{align}\label{eq26}
\begin{split}
&\det\left(\dot{\gamma_{\lambda}}, v_S\right)\left|_{\gamma_{\lambda_{}}}\right.=a(t)\cdot\wt S(\lambda,t),
\end{split}
\end{align}
where $\det(\cdot,\cdot)$ is the determinant of two $2$-dimensional vector fields. The proof of Theorem \ref{thm2} is given below.

\begin{proof}[Proof of Theorem \ref{thm2}]
According to the discussion above, we transform \eqref{eq1} into \eqref{eq31}.
Since equality \eqref{eq26} shows that
\begin{align*}
(-1)^i\cdot\wt S(\lambda_i,t)
=\frac{(-1)^i\cdot\det\left(\dot{\gamma_{\lambda_i}}, v_S\right)\left|_{\gamma_{\lambda_i}}\right.}{a(t)},\indent i=1,\cdots,m,
\end{align*}
the assumption implies that either
$(-1)^i\cdot\wt S(\lambda_i,t)\geq0$ for $i=1,\cdots,m$, or $(-1)^i\cdot\wt S(\lambda_i,t)\leq0$ for $i=1,\cdots,m$.
Hence, \eqref{eq31} satisfies Hypothesis {\bf (H)}. Using Theorem \ref{thm1}, \eqref{eq31} has at most $m$ isolated periodic solutions (counted with multiplicities), and the upper bound is sharp. Noting that $a(0)=a(1)$ and $b(0)=b(1)$, equations \eqref{eq1} and \eqref{eq31} have the same number of periodic solutions. As a result, the assertion of Theorem \ref{thm2} is obtained.
\end{proof}

For instance, consider equation
\begin{align}\label{eq93}
\begin{split}
\frac{dx}{dt}
&= S(x,t)\\
&=\cos(2\pi t)x^3
+\bigg(2+9\cos(2\pi t)+3\cos^2(2\pi t)\bigg)x^2\\
&\indent-\bigg(36\cos(2\pi t)+24\cos^2(2\pi t)+4\cos^3(2\pi t)\bigg)x-1.
\end{split}
\end{align}
Taking $a(t)=3+\cos(2\pi t)$ and $b(t)\equiv0$, we have $\gamma_{\lambda}=\big(t,\lambda a(t)\big)$ and
\begin{align}\label{eq94}
S(x,t)
&=\cos(2\pi t)x^3
+\big(2+3\cos(2\pi t)\cdot a(t)\big)x^2
-\big(4\cos(2\pi t)\cdot a^2(t)\big)x-1.
\end{align}
By the previous study and a direct calculation,
\begin{align*}
\det\left(\dot{\gamma_{\lambda}}, v_S\right)\big|_{\gamma_{\lambda}}
&=\lambda
\bigg(\cos(2\pi t)\cdot a^3(t)(\lambda-1)(\lambda+4)+2\lambda\cdot a^2(t)-\dot{a}(t)\bigg)-1.
\end{align*}
Therefore,
\begin{align*}
&\det\left(\dot{\gamma_{1}}, v_S\right)\big|_{\gamma_{1}}
=2\big(3+\cos(2\pi t)\big)^2+2\pi\cdot\sin(2\pi t)-1>0,\\
&\det\left(\dot{\gamma_{0}}, v_S\right)\big|_{\gamma_{0}}
=-1<0,\\
&\det\left(\dot{\gamma_{-4}}, v_S\right)\big|_{\gamma_{-4}}
=32\big(3+\cos(2\pi t)\big)^2-8\pi\cdot\sin(2\pi t)-1>0.
\end{align*}
Since $-4<0<1$, \eqref{eq93} satisfies Hypothesis {\bf (H')} with $\lambda_1=-4$, $\lambda_2=0$ and $\lambda_3=1$. We get from Theorem \ref{thm2} that \eqref{eq93} has at most 3 isolated periodic solutions,
counted with multiplicities.

\begin{proof}[Proof of Corollary \ref{cor1}]
It is known that two planar vectors are positively oriented (resp. negatively oriented) if and only if the determinant is positive (resp. negative).
By assumption, if $\{v_S,\dot{\gamma_{\lambda_i}}\}$ and $\{v_S,\dot{\gamma_{\lambda_{i+1}}}\}$ have opposite orientations for each $i=1,\cdots,m-1$, then \eqref{eq1} satisfies Hypothesis {\bf (H')}. Our assertion follows immediately from Theorem \ref{thm2}.
\end{proof}

\end{document}